\newtheorem{thm}{Theorem}[section]
\newtheorem{prop}[thm]{Proposition}
\newtheorem{cor}[thm]{Corollary}
\newtheorem{lem}[thm]{Lemma}
\theoremstyle{remark}
\begin{document}

\title{Evaluating Theta Derivatives with Rational Characteristics}

\author{Shaul Zemel}

\maketitle

\begin{center}
\textit{Dedicated to the memory of my grandfather, Isaac Zemel}
\end{center}

\begin{abstract}
We evaluate 35 different theta derivatives with rational characteristics using the method of Matsuda, in which the logarithmic derivatives at $z=0$ of the triple product expansions of the associated theta functions become linear combinations of known theta series of weight 1.
\end{abstract}

\section*{Introduction}

The \emph{Jacobi theta function} $\theta(\tau,z)=\sum_{n=-\infty}^{\infty}e^{\pi in^{2}\tau+2\pi inz}$, as well as the associated \emph{theta constant} $\theta(\tau,0)$, are well-known for many decades to be fundamental objects in number theory. One reason is that the theta constant is a modular form of weight $\frac{1}{2}$ with respect to a congruence subgroup of low level (in fact, \cite{[SS]} shows that variants of this theta function generate all the modular forms of weight $\frac{1}{2}$ with respect to every congruence subgroup), while $\theta(\tau,z)$ is the first example of a Jacobi form (see \cite{[EZ]} for more on this subject). These functions also have applications in combinatorics, enumerative number theory (e.g., the theory of partitions), the theory of lattices (which yields generalizations of these theta functions), differential equations (the Jacobi theta function is a fundamental solution to the heat equation), and many other branches in mathematics.

One considers more general theta functions, involving \emph{characteristics}. The effect of adding characteristics to the theta function boils down to  translations of $z$ and the summation index $n$ by some real number. There are many relations between the theta functions with various characteristics (e.g., the characteristics are essentially well-defined modulo $2\mathbb{Z}$), but the substitution $z=0$ yields many different theta constants. The theta constants with integral characteristics are very well-known modular forms of weight $\frac{1}{2}$, and the investigation of their properties goes back to the times of Jacobi and Euler. However, as integral characteristics yield only three non-zero theta constants, the number of possible constructions from them remains limited. Taking the derivative of a theta function with respect to $z$ at $z=0$ (such expressions are called \emph{theta derivatives}) produces a modular form of weight $\frac{3}{2}$, but with integral characteristics one obtains only one non-zero theta derivative. Moreover, the Jacobi derivative formula states that this additional function is not new, but is (up to a multiplicative constant) the product of the previous three theta constants.

It is a simple observation that by allowing the characteristics to become rational, the theta constants become modular forms (again of weight $\frac{1}{2}$) with respect to congruence subgroups of higher level (in fact, multiplying the argument $\tau$ by appropriate integers and taking appropriate linear combinations yields all the functions considered in \cite{[SS]} in this way). A similar assertion, with the weight being $\frac{3}{2}$, holds for the theta derivatives. \cite{[Mu]} posed the question, appearing as Question II on page 117 of that reference, whether all the theta derivatives of rational characteristics could be presented as polynomials of degree 3 in theta constants with rational characteristics, analogously to the Jacobi derivative formula. A positive answer to that question will, in view of \cite{[SS]}, simplify significantly the analysis of the rings of modular forms of half-integral weight with respect to these congruence subgroups. A more precise analysis of which theta function (or combination of such functions) is modular of which level may be found in \cite{[B1]}.

Recently \cite{[M1]} developed a method to find such expressions, in which one of the multipliers has the same characteristics as the theta derivative in question and the others have integral characteristics (but with different arguments $a\tau$ for positive rational $a$). The idea is to evaluate the logarithmic derivative of the associated theta function at $z=0$, when this function is expressed via the \emph{Jacobi triple product identity} as an infinite product of simple expressions. When the resulting series can be recognized as a known one, we obtain the desired expression for the theta derivative. We remark that \cite{[M1]} goes on to establish, by different means, other expressions for the same theta derivatives that are \emph{rational functions} of the theta constants. The sequels \cite{[M2]} and \cite{[M3]} go further in that direction, and obtain formulae comparing theta derivatives with other rational characteristics to rational functions in theta constants. However, as we are interested only in polynomial expressions in this paper, we stick to the first method from \cite{[M1]} in this paper. We remark that formulae like the Jacobi triple product identity, relating infinite series to infinite products, are investigated in \cite{[Z]}, \cite{[C]}, and others. The latter reference is a very recent and comprehensive treatise of various types of theta functions and the relations among them.

Now, \cite{[M1]} used this method in order to evaluate three theta derivatives, namely those in which the upper characteristic is 1 and the lower one is $\frac{1}{2}$, $\frac{1}{4}$, or $\frac{3}{4}$. For this explicit expressions for two theta series (which are essentially the Hecke theta series arising from the imaginary quadratic fields $\mathbb{Q}(\sqrt{i})$ and $\mathbb{Q}(\sqrt{-2})$) are used. We show that using only these tools, and perhaps some simple algebraic manipulations, 19 further theta derivatives (all those in which both the characteristics are in $\frac{1}{4}\mathbb{Z}$, but at least one of them is in $\frac{1}{2}\mathbb{Z}$) can be evaluated. Moreover, when one recognizes the theta series associated with $\mathbb{Q}(\sqrt{-3})$, we can evaluate 16 additional theta derivatives (those whose characteristics lie in $\frac{1}{3}\mathbb{Z}$). We remark that for other imaginary quadratic fields (also those with class number 1) additional technical difficulties arise. We leave the analysis of theta derivatives using such theta series for future investigation.

The Jacobi derivative formula also provides us with the Fourier expansion of the third power of the Dedekind eta function. \cite{[M1]} also uses his results (for the characteristic with $\frac{1}{2}$) for obtaining the Fourier expansion of another eta quotient of weight $\frac{3}{2}$. We indicate how some additional results in this direction as well (many of which are parallel to the main results of \cite{[LO]}), though in most of the cases our method yields the Fourier expansion of a linear combination of eta quotients, rather than of a single eta quotient. Some relations between eta quotients also follow from this argument.

\smallskip

This paper is divided into 3 sections. Section \ref{ThetaGen} describes the general observations about theta functions that are required for applying the argument (in fact, this presentation makes the paper almost fully self-contained, up to a proof for the Jacobi triple product identity itself). Section \ref{Charin1/4Z} presents the results of \cite{[M1]} again, and shows how the argument generalizes for the additional 19 theta derivatives in which the characteristics are from $\frac{1}{4}\mathbb{Z}$. Finally, Section \ref{Charin1/3Z} proves the results for characteristics from $\frac{1}{3}\mathbb{Z}$.

I would like to thank H. Farkas for many interesting discussions on this subject, as well as K. Matusda for sharing the details of the manuscript \cite{[M2]} with me (including the reference of the original question to \cite{[Mu]}).

\section{Theta Functions and Theta Derivatives \label{ThetaGen}}

Given two real numbers $\varepsilon$ and $\delta$, the \emph{theta function with characteristics $\big[{\varepsilon \atop \delta}\big]$} is a function of two variables, $\tau$ from the \emph{upper half-plane} $\mathcal{H}$ (i.e., a complex number with positive imaginary part), and $z\in\mathbb{C}$. It is defined by the infinite series expansion
\begin{equation}
\theta\big[{\textstyle{\varepsilon \atop \delta}}\big](\tau,z)=\sum_{n=-\infty}^{\infty}\mathbf{e}\Big[\big(n+\tfrac{\varepsilon}{2}\big)^{2}\tfrac{\tau}{2}+
\big(n+\tfrac{\varepsilon}{2}\big)\big(z+\tfrac{\delta}{2}\big)\Big], \label{defser}
\end{equation}
where $\mathbf{e}(u)$ stands for $e^{2\pi iu}$ for any complex number $u$. In addition we shall be using the classical notation $q=\mathbf{e}(\tau)$ and $\zeta=\mathbf{e}(z)$ (note that some authors, including \cite{[M1]} on which our work is based, use $q$ for the square root of our $q$), while the primitive $N$th root of unity $\mathbf{e}\big(\frac{1}{N}\big)$ will be denoted by $\zeta_{N}$ (no confusion with $\zeta$ should arise). The value of this function at $z=0$ and the value of the derivative with respect to $z$ at $z=0$ are called the \emph{theta constant} and \emph{theta derivative} with characteristics $\big[{\varepsilon \atop \delta}\big]$ respectively. We are interested in explicit expressions for the derivatives in terms of the constants, i.e., writing theta derivatives with rational characteristics in terms of theta constants. The Fourier expansions of the results will be clear from our arguments, while some of the final expression will also be phrased in terms of the Dedekind eta function \[\eta(\tau)=q^{1/24}\prod_{n=1}^{\infty}(1-q^{n})\] (the eta function can be viewed as a theta constant itself---see Proposition \ref{charover3} below). As with rational $\varepsilon$ and $\delta$ these theta constants become, as functions of $\tau$, modular forms of weight $\frac{1}{2}$ on some congruence subgroup of $SL_{2}(\mathbb{Z})$ (and the same holds for $\eta$), and the derivatives have weight $\frac{3}{2}$, one naturally expects to be able to express the derivatives as linear combinations of products of three theta constants (or of eta quotients of total weight $\frac{3}{2}$).

First, the theta functions have some well-known properties, most of which follow by a simple summation index change (see, e.g., Section 1 in Chapter 2 of \cite{[FK]}). Explicitly, we have the pseudo-periodicity property in $z$ with respect to the lattice determined by $\tau$, namely
\begin{equation}
\theta\big[{\textstyle{\varepsilon \atop \delta}}\big](\tau,z+a\tau+b)=\mathbf{e}\big(\tfrac{b\varepsilon-a\delta}{2}\big)\zeta^{-a}q^{-a^{2}/2}\theta\big[{\textstyle{\varepsilon \atop \delta}}\big](\tau,z), \label{perz}
\end{equation}
as well as the pseudo-2-periodicity in the characteristics,
\begin{equation}
\theta\big[{\textstyle{\varepsilon+2a \atop \delta+2b}}\big](\tau,z)=\mathbf{e}\big(\tfrac{b\varepsilon}{2}\big)\theta\big[{\textstyle{\varepsilon \atop \delta}}\big](\tau,z), \label{perchar}
\end{equation}
both holding for integral $a$ and $b$, real $\varepsilon$ and $\delta$, and every $\tau\in\mathcal{H}$ and $z\in\mathbb{C}$. Allowing arbitrary real $a$ and $b$, all the theta functions are related to one another via the relation
\begin{equation}
\theta\big[{\textstyle{\varepsilon+a \atop \delta+b}}\big](\tau,z)=\mathbf{e}\big(\tfrac{a(b+\delta)}{4}\big)\zeta^{a/2}q^{a^{2}/8}\theta\big[{\textstyle{\varepsilon \atop \delta}}\big]\big(\tau,z+\tfrac{a\tau+b}{2}\big) \label{charrel}
\end{equation}
(with Equation \eqref{perchar} following from Equations \eqref{perz} and \eqref{charrel}), and we also have the parity relation
\begin{equation}
\theta\big[{\textstyle{-\varepsilon \atop -\delta}}\big](\tau,z)=\theta\big[{\textstyle{\varepsilon \atop \delta}}\big](\tau,-z),\quad\mathrm{whence\ also}\quad\theta\big[{\textstyle{-\varepsilon \atop -\delta}}\big]'(\tau,z)=-\theta\big[{\textstyle{\varepsilon \atop \delta}}\big]'(\tau,-z). \label{parity}
\end{equation}
Here and throughout, the prime denotes differentiation with respect to $z$ only. Now, Equation \eqref{perchar} allows us to restrict attention to $\varepsilon$ and $\delta$ in $[0,2)$ (or equivalently in $(-1,1]$). Integration over the boundary of a fundamental parallelogram with respect to $\tau$ shows, using equation \eqref{perz}, that each theta function with characteristics has precisely one zero (in $z$) in each such parallelogram. Using Equations \eqref{parity} and \eqref{perz} we determine this zero for the case $\varepsilon=\delta=1$ to be $z=0$ (and its $\tau$-lattice translates), and Equation \eqref{charrel} then yields the following lemma.
\begin{lem}
If $\varepsilon$ and $\delta$ are in $(-1,1]$ then the unique zero of $\theta\big[{\varepsilon \atop \delta}\big](\tau,z)$ that is of the form $z=c\tau+d$ with $c$ and $d$ in $[0,1)$ is the point $z=\frac{1-\varepsilon}{2}\tau+\frac{1-\delta}{2}$. \label{zero}
\end{lem}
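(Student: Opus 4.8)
The plan is to reduce the general statement to the single case $\varepsilon=\delta=1$ and then transport it by the multiplier relation \eqref{charrel}, exploiting the established fact (obtained above via integration over the boundary of a fundamental parallelogram together with \eqref{perz}) that each theta function has exactly one zero per parallelogram. Concretely, I would first locate all zeros of $\theta\big[{\textstyle{1 \atop 1}}\big]$, then use \eqref{charrel} to read off the zeros of $\theta\big[{\textstyle{\varepsilon \atop \delta}}\big]$, and finally single out the representative with $c,d\in[0,1)$ using the hypothesis $\varepsilon,\delta\in(-1,1]$.

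For the base case I would show that $\theta\big[{\textstyle{1 \atop 1}}\big](\tau,z)$ is an odd function of $z$. This follows by combining the parity relation \eqref{parity}, which gives $\theta\big[{\textstyle{-1 \atop -1}}\big](\tau,z)=\theta\big[{\textstyle{1 \atop 1}}\big](\tau,-z)$, with the pseudo-$2$-periodicity \eqref{perchar} (taking base characteristics $-1,-1$ and $a=b=1$), which gives $\theta\big[{\textstyle{1 \atop 1}}\big](\tau,z)=-\theta\big[{\textstyle{-1 \atop -1}}\big](\tau,z)$; alternatively the oddness can be seen straight from \eqref{defser} by the index substitution $n\mapsto-1-n$. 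Either way $\theta\big[{\textstyle{1 \atop 1}}\big](\tau,-z)=-\theta\big[{\textstyle{1 \atop 1}}\big](\tau,z)$, so setting $z=0$ yields $\theta\big[{\textstyle{1 \atop 1}}\big](\tau,0)=0$. Since $z=0$ is then a zero and there is exactly one zero per parallelogram, \eqref{perz} shows that the full zero set of $\theta\big[{\textstyle{1 \atop 1}}\big]$ is precisely the lattice $\mathbb{Z}\tau+\mathbb{Z}$ (and the asserted point is indeed $0$ when $\varepsilon=\delta=1$).

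For the general case I would apply \eqref{charrel} with base characteristics $1,1$ and shifts $a=\varepsilon-1$, $b=\delta-1$, so that $\theta\big[{\textstyle{\varepsilon \atop \delta}}\big](\tau,z)$ equals a prefactor times $\theta\big[{\textstyle{1 \atop 1}}\big]\big(\tau,z+\tfrac{(\varepsilon-1)\tau+(\delta-1)}{2}\big)$. The crucial observation is that this prefactor, namely $\mathbf{e}\big(\tfrac{(\varepsilon-1)\delta}{4}\big)\zeta^{(\varepsilon-1)/2}q^{(\varepsilon-1)^{2}/8}$, never vanishes, so no zeros are created or destroyed. Hence $z$ is a zero of $\theta\big[{\textstyle{\varepsilon \atop \delta}}\big]$ exactly when $z+\tfrac{(\varepsilon-1)\tau+(\delta-1)}{2}\in\mathbb{Z}\tau+\mathbb{Z}$, that is, $z\in\tfrac{1-\varepsilon}{2}\tau+\tfrac{1-\delta}{2}+(\mathbb{Z}\tau+\mathbb{Z})$.

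Finally I would isolate the required representative: writing $z=c\tau+d$ gives $c=\tfrac{1-\varepsilon}{2}+m$ and $d=\tfrac{1-\delta}{2}+n$ for integers $m,n$, and since $\varepsilon,\delta\in(-1,1]$ forces $\tfrac{1-\varepsilon}{2},\tfrac{1-\delta}{2}\in[0,1)$, the only choice with $c,d\in[0,1)$ is $m=n=0$, yielding $z=\tfrac{1-\varepsilon}{2}\tau+\tfrac{1-\delta}{2}$ as claimed. I do not anticipate a serious obstacle: the sole content-bearing step is the oddness of $\theta\big[{\textstyle{1 \atop 1}}\big]$ in the base case, and the one point demanding care is checking that the multiplier in \eqref{charrel} is genuinely nonvanishing so that transporting zeros is legitimate; everything after that is bookkeeping with the ranges of $\varepsilon$ and $\delta$.
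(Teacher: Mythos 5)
Your proposal is correct and is essentially the paper's own argument: the paper's proof sketch likewise establishes one zero per fundamental parallelogram by contour integration, determines that zero for $\varepsilon=\delta=1$ from the parity relation \eqref{parity} (oddness of $\theta\big[{1 \atop 1}\big]$, hence vanishing at $z=0$) together with \eqref{perz}, and then transports it to general characteristics via the nonvanishing multiplier in \eqref{charrel}. Your concluding bookkeeping, that $\varepsilon,\delta\in(-1,1]$ forces $\tfrac{1-\varepsilon}{2},\tfrac{1-\delta}{2}\in[0,1)$ and thus selects the unique representative, is exactly the selection the paper makes implicitly.
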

Moreover, by Equation \eqref{parity} we may further restrict our attention to theta functions (and constants, and derivatives) in which the characteristics $\big[{\varepsilon \atop \delta}\big]$ satisfy  $0\leq\varepsilon\leq1$ and $0\leq\delta<2$, and if $\varepsilon\in\{0,1\}$ then we can further consider only $0\leq\delta\leq1$.

\smallskip

The main idea of the proofs of all the following identities, following \cite{[M1]}, is the use of the \emph{Jacobi triple product identity}. This identity is given by the equality
\[\sum_{n=-\infty}^{\infty}x^{n^{2}/2}y^{n}=\prod_{n=1}^{\infty}(1-x^{n})(1+x^{n}y)\big(1+\tfrac{x^{n}}{y}\big),\] holding either as formal power series, or as convergent complex expressions in case $|x|<1$ (and $y\neq0$, of course). For a simple proof of it see \cite{[A]} or Section 1.1 of \cite{[K]}. Taking $x=q$ and $y=\mathbf{e}\big(\frac{\delta}{2}\big)q^{\varepsilon/2}\zeta$ in the Jacobi triple product identity, we find that $\theta\big[{\varepsilon \atop \delta}\big](\tau,z)$ (related to $\theta\big[{0 \atop 0}\big]$ as in Equation \eqref{charrel}) can be expanded as
\begin{equation}
\mathbf{e}\big(\tfrac{\varepsilon\delta}{4}\big)q^{\varepsilon^{2}/8}\zeta^{\varepsilon/2}\prod_{n=1}^{\infty}(1-q^{n})
\Big(1+\mathbf{e}\big(\tfrac{\delta}{2}\big)q^{n-\frac{1-\varepsilon}{2}}\zeta\Big)\Big(1+\mathbf{e}\big(-\tfrac{\delta}{2}\big)q^{n-\frac{1+\varepsilon}{2}}/\zeta\Big). \label{tripprod}
\end{equation}
The vanishing from Lemma \ref{zero} (in particular the vanishing of $\theta\big[{1 \atop 1}\big](\tau,0)$) can be easily seen from Equation \eqref{tripprod} as well, in which it occurs only at the rightmost term with $n=1$. In fact, all the properties appearing in Equations \eqref{perz}, \eqref{perchar}, \eqref{charrel}, and \eqref{parity} can be established by appropriately manipulating Equation \eqref{tripprod}.

\smallskip

The first relations that one can deduce from Equation \eqref{tripprod} are expressions for the theta constants with integral characteristics in terms of the Dedekind eta function.
\begin{prop}
We have the following expressions for the theta constants with integral characteristics: \[\theta\big[{\textstyle{1 \atop 0}}\big](\tau,0)=2\frac{\eta^{2}(2\tau)}{\eta(\tau)},\ \theta\big[{\textstyle{0 \atop 1}}\big](\tau,0)=\frac{\eta^{2}\big(\frac{\tau}{2}\big)}{\eta(\tau)},\mathrm{\ and\ }\theta\big[{\textstyle{0 \atop 0}}\big](\tau,0)=\frac{\eta^{5}(\tau)}{\eta^{2}(2\tau)\eta^{2}\big(\frac{\tau}{2}\big)}.\] \label{intchar}
\end{prop}

\begin{proof}
For $\theta\big[{1 \atop 0}\big](\tau,0)$ the last term with $n=1$ in Equation \eqref{tripprod} yields the constant 2, and the rest gives the product $q^{1/8}\prod_{i=1}^{n}(1-q^{n})(1+q^{n})^{2}$. By writing $1+q^{n}$ as $\frac{1-q^{2n}}{1-q^{n}}$ we obtain the required expression. Next, Equation \eqref{tripprod} presents $\theta\big[{0 \atop 1}\big](\tau,0)$ as just $\prod_{i=1}^{n}(1-q^{n})(1-q^{n-1/2})^{2}$, and the fact that $\prod_{i=1}^{n}(1-q^{n-1/2})$ is the quotient $\prod_{i=1}^{n}(1-q^{n/2})/\prod_{i=1}^{n}(1-q^{n})$ yields the desired expression. Finally, Equation \eqref{tripprod} implies that $\theta\big[{0 \atop 0}\big](\tau,0)$ equals $\prod_{i=1}^{n}(1-q^{n})(1+q^{n-1/2})^{2}$. We replace each $1+q^{n+1/2}$ by $\frac{1-q^{2n-1}}{1-q^{n-1/2}}$, and after writing $\prod_{i=1}^{n}(1-q^{n-1/2})$ as the quotient from above as well as $\prod_{i=1}^{n}(1-q^{2n-1})$ as $\prod_{i=1}^{n}(1-q^{n})/\prod_{i=1}^{n}(1-q^{2n})$ (by the exact same argument), the asserted expression follows (the external powers of $q$ combine to the correct factors in all three cases). This proves the proposition
\end{proof}
Proposition \ref{intchar} is, up to rescaling of the variable $\tau$, a restatement of the first and third equalities in Theorem 1.1 of \cite{[LO]} and the first one in Theorem 1.2 there, or equivalently of Equations (8.5), (8.7), and (8.8) inside Theorem 8.1 of \cite{[K]}. Similar expressions for some theta constants with rational characteristics in terms of eta quotients (including ones relating single eta functions to particular theta constants), which are equivalent to other results from these references, are given in Propositions \ref{halfchar} and \ref{charover3} below.

\smallskip

We have seen that $\theta\big[{1 \atop 1}\big](\tau,0)$ vanishes, and the same assertion holds for the three theta derivatives $\theta\big[{0 \atop 0}\big]'(\tau,0)$, $\theta\big[{0 \atop 1}\big]'(\tau,0)$, and $\theta\big[{1 \atop 0}\big]'(\tau,0)$. The remaining integral theta derivative is given in terms of the \emph{Jacobi derivative formula}, that reads
\begin{equation}
\theta\big[{\textstyle{1 \atop 1}}\big]'(\tau,0)=-\pi\theta\big[{\textstyle{0 \atop 0}}\big](\tau,0)\theta\big[{\textstyle{0 \atop 1}}\big](\tau,0)\theta\big[{\textstyle{1 \atop 0}}\big](\tau,0)=-2\pi\eta^{3}(\tau). \label{clasder}
\end{equation}
By replacing the theta derivative on the left hand side of Equation \eqref{clasder} by the derivative of the series from Equation \eqref{defser} at $z=0$ and using the obvious symmetry between $n$ and $-1-n$ we find the well-known Fourier expansion
\begin{equation}
\eta^{3}(\tau)=\sum_{n=1}^{\infty}(-1)^{n}(2n+1)q^{(2n+1)^{2}/8}=\sum_{n=1}^{\infty}n\big(\tfrac{-4}{n}\big)q^{n^{2}/8}, \label{eta3exp}
\end{equation}
appearing (again up to variable rescaling) in part 2 of Theorem 1.1 of \cite{[LO]}, in Equation (1.7) in Corollary 1.4 of \cite{[K]},  as well as in Equation (8.15) in Theorem 8.5 of that reference (recall that dividing Equation \eqref{eta3exp} by $q^{1/8}$ replaces $\eta^{3}(\tau)$ by $\prod_{n=1}^{\infty}(1-q^{n})^{3}$ and the power $\frac{(2n+1)^{2}}{8}$ of $q$ by $\frac{n(n+1)}{2}$, thus reproducing the classical identity of Jacobi from, e.g., Equation (1.7) inside Corollary 1.4 of \cite{[K]}). A simple proof of Equation \eqref{clasder}, which we shall indicate in the following paragraph, can be given using the Jacobi triple product identity. \cite{[M1]} derived similar expressions, again using the Jacobi triple product identity, for $\theta\big[{1 \atop 1/2}\big]'(\tau,0)$, $\theta\big[{1 \atop 1/4}\big]'(\tau,0)$, and $\theta\big[{1 \atop 3/4}\big]'(\tau,0)$ (see his Theorems 1 and 2---the later results of that reference use additional tools, which we shall not require in our investigation).

The case of the classical Jacobi derivative formula is simple: The expansion of $\theta\big[{1 \atop 1}\big](\tau,z)$ includes the multiplier $1-\frac{1}{\zeta}$ from the rightmost term with $n=1$ in Equation \eqref{tripprod}. Hence the derivative at $z=0$ is just $2\pi i$ times the remaining expression evaluated at $z=0$, and this substitution gives $iq^{1/8}\prod_{i=1}^{n}(1-q^{n})^{3}$. This yields the rightmost expression in Equation \eqref{clasder}, and the relation with the product of the three theta constants is immediate from Proposition \ref{intchar}. However, for other characteristics the theta constant does not vanish, and the idea of \cite{[M1]} is to take the \emph{logarithmic} derivative of the expressions coming from the theta function in question via Jacobi triple product identity. Hence the classical, integral case is a bit misleading as for the argument required for all the other cases.

\smallskip

We therefore evaluate the logarithmic derivative of the expression from Equation \eqref{tripprod} at $z=0$, under the restrictions on $\varepsilon$ and $\delta$ from above (but also excluding the case $\varepsilon=\delta=1$), which yields \[\frac{\theta\big[{\varepsilon \atop \delta}\big]'(\tau,0)}{\theta\big[{\varepsilon \atop \delta}\big](\tau,0)}=2\pi i\Bigg[\frac{\varepsilon}{2}+\sum_{n=1}^{\infty}\Bigg(\frac{\mathbf{e}\big(\frac{\delta}{2}\big)q^{n-\frac{1-\varepsilon}{2}}}{1+\mathbf{e}\big(\frac{\delta}{2}\big)
q^{n-\frac{1-\varepsilon}{2}}}-\frac{\mathbf{e}\big(-\frac{\delta}{2}\big)q^{n-\frac{1+\varepsilon}{2}}}{1+\mathbf{e}\big(-\frac{\delta}{2}\big)q^{n-\frac{1+\varepsilon}{2}}}\Bigg)\Bigg].\]
Recalling that $|q|<1$ and that $0\leq\varepsilon\leq1$, we may expand every denominator as a geometric series, except when $\varepsilon=1$, where the rightmost term with $n=1$ is just a constant. By setting $\beta=-\mathbf{e}\big(-\frac{\delta}{2}\big)$ (which is a root of unity when $\delta\in\mathbb{Q}$) and changing the order of the terms for later convenience, the latter expression for the theta derivative becomes
\begin{equation}
2\pi i\bigg[\frac{1+\beta}{2(1-\beta)}+\sum_{n=1}^{\infty}\sum_{l=1}^{\infty}(\beta^{l}-\overline{\beta}^{l})q^{ln}\bigg] \label{logdereps1}
\end{equation}
if $\varepsilon=1$ (the constant is the sum of $\frac{1}{2}$ and $\frac{\beta}{1-\beta}$, and the denominator does not vanish we exclude the case where $\delta=1$), and
\begin{equation}
2\pi i\bigg[\frac{\varepsilon}{2}+\sum_{n=1}^{\infty}\sum_{l=1}^{\infty}\Big(\beta^{l}q^{l[n-\frac{1+\varepsilon}{2}]}-\overline{\beta}^{l}q^{l[n-\frac{1-\varepsilon}{2}]}\Big)\bigg] \label{logdergen}
\end{equation}
for $0\leq\varepsilon<1$. Observe that Equation \eqref{logdereps1} with $\delta=0$ (and $\beta=-1$) also yields the vanishing of the theta derivative $\theta\big[{1 \atop 0}\big]'(\tau,0)$. As in Equation \eqref{logdergen} with $\varepsilon=0$ we get just a sum of the form $2\pi i\sum_{n=1}^{\infty}\sum_{l=1}^{\infty}(\beta^{l}-\overline{\beta}^{l})q^{l(n-1/2)}$, the vanishing of $\theta\big[{0 \atop 0}\big]'(\tau,0)$ and $\theta\big[{0 \atop 1}\big]'(\tau,0)$ is also easily seen (since $\beta=\pm1$ then). It remains to find out for which characteristics $\big[{\varepsilon \atop \delta}\big]$ (under our restrictions) the series from Equations \eqref{logdereps1} and \eqref{logdergen} yield a known function.

\smallskip

Our analysis will be based on the identification of the Fourier expansion of the following theta series, which are roughly the \emph{Hecke theta series} (with trivial character) associated to the imaginary quadratic fields $\mathbb{Q}(i=\sqrt{-1})$, $\mathbb{Q}(\sqrt{-2})$, and $\mathbb{Q}(\sqrt{-3})$ in the language of Chapter 5 of \cite{[K]} (and others). For $D\in\{1,2,3\}$ we denote the ring of integers in the field $\mathbb{Q}(\sqrt{-3})$ by $\mathcal{O}_{D}$, and set $\Theta_{D}(\tau)$ to be $\sum_{\alpha\in\mathcal{O}_{D}}q^{N(\alpha)/2}$, where $N(\alpha)$ is the norm of $\alpha$. As the theta function of a positive definite (odd) rank 2 lattice, it is well-known to be modular of weight 1 (with some level), and we would also like to express it as a quadratic expression in theta constants with integral characteristics. As we shall later identify the series $\Theta_{D}$ below via its Fourier expansion, we shall require some properties of its coefficients, for which we write $\Theta_{D}(\tau)=1+\sum_{N=1}^{\infty}s_{D}(N)q^{N/2}$ for every such $D$.
\begin{prop}
For every positive integer $N$ we have $s_{1}(N)=4\sum_{2\not|d|N}\big(\tfrac{-1}{d}\big)$, $s_{2}(N)=2\sum_{2\not|d|N}\big(\tfrac{-2}{d}\big)$, and $s_{3}(N)=6\sum_{d|N}\big(\tfrac{d}{3}\big)$. In addition, the following equalities hold:
\[\Theta_{1}(\tau)=\theta\big[{\textstyle{0 \atop 0}}\big]^{2}(\tau,0),\qquad\Theta_{2}(\tau)=\theta\big[{\textstyle{0 \atop 0}}\big](\tau,0)\theta\big[{\textstyle{0 \atop 0}}\big](2\tau,0),\] and
\[\Theta_{3}(\tau)=\theta\big[{\textstyle{0 \atop 0}}\big](\tau,0)\theta\big[{\textstyle{0 \atop 0}}\big](3\tau,0)+\theta\big[{\textstyle{1 \atop 0}}\big](\tau,0)\theta\big[{\textstyle{1 \atop 0}}\big](3\tau,0).\] \label{theta1}
\end{prop}
The expressions for the coefficients $s_{D}(N)$ follow from the fact that the imaginary quadratic fields $\mathbb{Q}(i)$, $\mathbb{Q}(\sqrt{-2})$, and $\mathbb{Q}(\sqrt{-3})$ have class number 1 and unit groups of orders 4, 2, and 6 respectively, so that after dividing by the number of units they become multiplicative and are easily verified for prime powers. The first two theta equalities are stated in \cite{[M1]}, with references to \cite{[B2]} and \cite{[D]} for proofs. For part $(iii)$ we also observe that the norm $a^{2}-ab+b^{2}$ of $a+b\zeta_{3}$ can be written as $\big(a-\frac{b}{2}\big)^{2}+3\big(\frac{b}{2}\big)^{2}$, so that the sum with even $b=2c$ yields the first summand, while the sum with odd $b=2c+1$ produces the second one.

Alternative expressions for the coefficients $s_{D}(N)$ that will be useful for recognizing the Fourier expansion of the functions $\Theta_{D}$ are as follows. We shall denote, roughly following \cite{[M1]}, the number of divisors $d$ of some number $N$, that are congruent to some number $r$ modulo a residue $t$, by $\sigma_{r+t\mathbb{Z}}(N)$.
\begin{lem}
The following equalities hold for any integer $N\geq1$:
\[(i)\qquad s_{1}(N)=4\sigma_{1+4\mathbb{Z}}(N)-4\sigma_{3+4\mathbb{Z}}(N)=-4i\sum_{2\not|d|N}i^{d}=-2i\sum_{d|N}\big(i^{d}-(-i)^{d}\big).\] Moreover, we have $s_{1}(N)=s_{1}(2N)$ for every such $N$.
\[(ii)\qquad s_{2}(N)=2\sigma_{1+8\mathbb{Z}}(N)+2\sigma_{3+8\mathbb{Z}}(N)-2\sigma_{5+8\mathbb{Z}}(N)-2\sigma_{7+8\mathbb{Z}}(N)=\] \[-\big(\tfrac{-2}{t}\big)\sqrt{2}i\sum_{2\not|d|N}(\zeta_{8}^{td}-\overline{\zeta}_{8}^{td})=\big(\tfrac{2}{t}\big)\sqrt{2}\sum_{2\not|d|N}\big(\tfrac{-1}{d}\big)(\zeta_{8}^{td}+\overline{\zeta}_{8}^{td})\] for every odd $t$. Also here we have $s_{2}(N)=s_{2}(2N)$ for every $N$.
\[(iii)\qquad2s_{2}(N)+s_{1}(N)=8\sigma_{1+8\mathbb{Z}}(N)-8\sigma_{7+8\mathbb{Z}}(N)\] and \[2s_{2}(N)-s_{1}(N)=8\sigma_{3+8\mathbb{Z}}(N)-8\sigma_{5+8\mathbb{Z}}(N).\]
\[(iv)\qquad s_{3}(N)=6\sigma_{1+3\mathbb{Z}}(N)-6\sigma_{2+3\mathbb{Z}}(N)=-2\sqrt{3}i\sum_{d|N}(\zeta_{3}^{d}-\overline{\zeta}_{3}^{d}).\] The invariance here takes the form $s_{3}(N)=s_{3}(3N)$ for every $N$. \label{recogser}
\end{lem}
The proof is very simple and straightforward. One just evaluates the characters and the powers of the roots of unity involved. The invariance under multiplying $N$ by powers of the appropriate prime is also immediate from the fact that the set of divisors effectively contributing to the coefficient in question remains unchanged in every case (for $s_{1}$ and $s_{2}$ it is related to some of the assertions in Theorems 7.3 and 7.5 of \cite{[M3]}).

Recall that decomposing the sums defining $\theta\big[{0 \atop 0}\big](\tau,0)$ and $\theta\big[{0 \atop 1}\big](\tau,0)$ in Equation \eqref{defser} according to the parity of the summation index $n$ yields the relations
\begin{equation}
\theta\big[{\textstyle{0 \atop 0}}\big](\tau,0)=\theta\big[{\textstyle{0 \atop 0}}\big](4\tau,0)+\theta\big[{\textstyle{1 \atop 0}}\big](4\tau,0)=2\theta\big[{\textstyle{0 \atop 0}}\big](4\tau,0)-\theta\big[{\textstyle{0 \atop 1}}\big](\tau,0), \label{pardecom}
\end{equation}
from which we also obtain
\[\theta\big[{\textstyle{0 \atop 0}}\big](\tau,0)=2\theta\big[{\textstyle{1 \atop 0}}\big](4\tau,0)+\theta\big[{\textstyle{0 \atop 1}}\big](\tau,0)\mathrm{\ and\ }\theta\big[{\textstyle{0 \atop 1}}\big](\tau,0)=\theta\big[{\textstyle{0 \atop 0}}\big](4\tau,0)-\theta\big[{\textstyle{1 \atop 0}}\big](4\tau,0).\] We remark that Equation \eqref{pardecom} combines with Proposition \ref{intchar} to yield the eta identities
\begin{equation}
\frac{\eta^{5}(\tau)}{\eta^{2}\big(\tfrac{\tau}{2}\big)\eta^{2}(2\tau)}\!=\frac{\eta^{6}(4\tau)+2\eta^{2}(2\tau)\eta^{4}(8\tau)}{\eta^{2}(2\tau)\eta(4\tau)\eta^{2}(8\tau)}=
\frac{2\eta(\tau)\eta^{5}(4\tau)-\eta^{2}\big(\tfrac{\tau}{2}\big)\eta^{2}(2\tau)\eta^{2}(8\tau)}{\eta(\tau)\eta^{2}(2\tau)\eta^{2}(8\tau)}= \label{etaid}
\end{equation}
\[=\frac{4\eta(\tau)\eta^{2}(8\tau)+\eta^{2}\big(\tfrac{\tau}{2}\big)\eta(4\tau)}{\eta(\tau)\eta(4\tau)},\mathrm{\ as\ well\ as\ }\frac{\eta^{2}\big(\frac{\tau}{2}\big)}{\eta(\tau)}=\frac{\eta^{6}(4\tau)-2\eta^{2}(2\tau)\eta^{4}(8\tau)}{\eta^{2}(2\tau)\eta(4\tau)\eta^{2}(8\tau)},\]
and combining these equalities yields additional eta identities.

We now prove some useful equivalents of Equation \eqref{pardecom} for the series from Proposition \ref{theta1} and the coefficients from Lemma \ref{recogser}.
\begin{cor}
The following Fourier expansions define explicit combinations of theta functions:
\[(i)\ \ \ \sum_{2\not|N}s_{1}(N)q^{N/2}=\Theta_{1}(\tau)-\Theta_{1}(2\tau)=\theta\big[{\textstyle{0 \atop 0}}\big]^{2}(\tau,0)-\theta\big[{\textstyle{0 \atop 0}}\big]^{2}(2\tau,0)=\theta\big[{\textstyle{1 \atop 0}}\big]^{2}(2\tau,0)\] and \[1+\sum_{N=1}^{\infty}(-1)^{N}s_{1}(N)q^{N/2}=2\Theta_{1}(2\tau)-\Theta_{1}(\tau)=\theta\big[{\textstyle{0 \atop 0}}\big]^{2}(2\tau,0)-\theta\big[{\textstyle{1 \atop 0}}\big]^{2}(2\tau,0).\]
\[(ii)\qquad\sum_{2\not|N}s_{2}(N)q^{N/2}=\Theta_{2}(\tau)-\Theta_{2}(2\tau)=\theta\big[{\textstyle{0 \atop 0}}\big](2\tau,0)\theta\big[{\textstyle{1 \atop 0}}\big](4\tau,0)\] and \[1+\sum_{N=1}^{\infty}(-1)^{N}s_{2}(N)q^{N/2}=2\Theta_{2}(2\tau)-\Theta_{2}(\tau)=\theta\big[{\textstyle{0 \atop 0}}\big](2\tau,0)\theta\big[{\textstyle{0 \atop 1}}\big](\tau,0).\]
\[(iii)\qquad\sum_{2\not|N}\big(\tfrac{-1}{N}\big)s_{1}(N)q^{N/2}=\Theta_{1}(\tau)-\Theta_{1}(2\tau)=\theta\big[{\textstyle{1 \atop 0}}\big]^{2}(2\tau,0)\] again, and
\[\sum_{2\not|N}\big(\tfrac{-1}{N}\big)s_{2}(N)q^{N/2}=\theta\big[{\textstyle{1 \atop 0}}\big](4\tau,0)\theta\big[{\textstyle{0 \atop 1}}\big](2\tau,0).\] \label{pow2twist}
\end{cor}

\begin{proof}
The leftmost equalities in all the equations in parts $(i)$ and $(ii)$ follow from the invariance of the coefficients $s_{1}(N)$ and $s_{2}(N)$ under multiplying $N$ by a power of 2 (see parts $(i)$ and $(ii)$ of Lemma \ref{recogser}). We then substitute the formulae from parts $(i)$ and $(ii)$ of Proposition \ref{theta1}, use Lemma 1.6 of Chapter 2 of \cite{[FK]} (with trivial characteristics) for the relations with $\theta\big[{1 \atop 0}\big]^{2}(2\tau,0)$ in part $(i)$, and the final expressions in part $(ii)$ are consequences of Equation \eqref{pardecom}. The first assertion in part $(iii)$ is immediate since it is known that the power series from part $(i)$ of Proposition \ref{theta1} does not contain any power of $q^{1/2}$ whose exponent is congruent to 3 modulo 4. For the remaining assertion we consider the residues modulo 4 of the powers appearing in the series $\theta\big[{0 \atop 0}\big](2\tau,0)\theta\big[{1 \atop 0}\big](4\tau,0)$ from part $(ii)$ here. Since odd squares are congruent to 1 modulo 4 (and even squares are divisible by 4), when we decompose $\theta\big[{\textstyle{0 \atop 0}}\big](2\tau,0)$ as in Equation \eqref{pardecom} we see that the product $\theta\big[{0 \atop 0}\big](8\tau,0)\theta\big[{1 \atop 0}\big](4\tau,0)$ yields only powers of $q^{N/2}$ with $N\equiv1(\mathrm{mod\ }4)$, while $\theta\big[{1 \atop 0}\big](8\tau,0)\theta\big[{1 \atop 0}\big](4\tau,0)$ involves only exponents that are congruent to 3 modulo 4 (in fact, such $N$ must lie in $1+8\mathbb{Z}$ or $3+8\mathbb{Z}$ respectively, since $s_{2}(N)=0$ when $N$ is congruent to 5 or 7 modulo 8). Hence inverting the sign of the coefficients that give residue 3 modulo 4 just corresponds to inverting the sign of
$\theta\big[{1 \atop 0}\big](8\tau,0)\theta\big[{1 \atop 0}\big](4\tau,0)$, and another application of Equation \eqref{pardecom} shows that this operation produces the asserted expression. This completes the proof of the corollary.
\end{proof}
We note that Corollary \ref{pow2twist} (as well as Equation \eqref{pardecom} before it) can be seen as some variant of the sign transform from Section 1.2 of \cite{[K]}. Therefore the expressions with $\Theta_{1}$ and $\Theta_{2}$ there can also be expressed by translations of the variable $\tau$ (including the latter assertion of part $(iii)$ there). However, we prefer to keep just multiples of $\tau$ as the variable throughout, and there does not seem to be a simple expression using $\Theta_{2}$ for the last series in Corollary \ref{pow2twist}. We shall therefore use only expressions in terms of theta functions with characteristics, of weight $\frac{1}{2}$, in Section \ref{Charin1/4Z} below.

We shall also be needing some properties and variants of the series $\Theta_{3}$.
\begin{cor}
Twisting the Fourier expansion of $\Theta_{3}$ yields the following functions:
\[(i)\qquad\sum_{2\not|N}s_{3}(N)q^{N/2}=\Theta_{3}(\tau)-\Theta_{3}(4\tau),\quad\sum_{3\not|N}s_{3}(N)q^{N/2}=\Theta_{3}(\tau)-\Theta_{3}(3\tau),\] and \[1+\sum_{N=1}^{\infty}(-1)^{N}s_{3}(N)q^{N/2}=2\Theta_{3}(4\tau)-\Theta_{3}(\tau).\]
$(ii)$ Modify $s_{3}(N)$ to set $s_{3}^{o}(N)=6\sum_{2\not|d|N}\big(\tfrac{d}{3}\big)=-2\sqrt{3}i\sum_{2\not|d|N}(\zeta_{3}^{d}-\overline{\zeta}_{3}^{d})$ as well as $s_{3}^{\hat{o}}(N)=6\sum_{d|N,\ 2\not|N/d}\big(\tfrac{d}{3}\big)=-2\sqrt{3}i\sum_{d|N,\ 2\not|N/d}(\zeta_{3}^{d}-\overline{\zeta}_{3}^{d})$. Then
\[2+\sum_{N=1}^{\infty}s_{3}^{o}(N)q^{N/2}=\Theta_{3}(\tau)+\Theta_{3}(2\tau)\quad\mathrm{and}\quad\sum_{N=1}^{\infty}s_{3}^{\hat{o}}(N)q^{N/2}=\Theta_{3}(\tau)-\Theta_{3}(2\tau).\]
$(iii)$ Similarly let $s_{3}^{s}(N)=6\sum_{d|N}(-1)^{d}\big(\tfrac{d}{3}\big)$ and $s_{3}^{\hat{s}}(N)=6\sum_{d|N}(-1)^{N/d}\big(\tfrac{d}{3}\big)$. We then have
\[-3+\sum_{N=1}^{\infty}s_{3}^{s}(N)q^{N/2}=-\Theta_{3}(\tau)-2\Theta_{3}(2\tau)\mathrm{\ and\ }s_{3}^{s}(N)=+2\sqrt{3}i\sum_{d|N}(\zeta_{6}^{d}-\overline{\zeta}_{6}^{d}),\] as well as \[1+\sum_{N=1}^{\infty}s_{3}^{\hat{s}}(N)q^{N/2}=2\Theta_{3}(2\tau)-\Theta_{3}(\tau).\]
$(iv)$ The sum $-2\sqrt{3}i\sum_{2\not|d|N}(\zeta_{6}^{d}-\overline{\zeta}_{6}^{d})$ gives $s_{3}^{o}(N)$ back again, while the expression $6\sum_{2\not|d|N}(-1)^{N/d}\big(\frac{d}{3}\big)$ equals $(-1)^{N}s_{3}^{o}(N)$, with the series
\[2+\sum_{N=1}^{\infty}(-1)^{N}s_{3}^{o}(N)q^{N/2}=2\Theta_{3}(4\tau)-\Theta_{3}(\tau)+\Theta_{3}(2\tau).\]
On the other hand, the sum $-2\sqrt{3}i\sum_{d|N,\ 2\not|N/d}(\zeta_{6}^{d}-\overline{\zeta}_{6}^{d})$ yields $-(-1)^{N}s_{3}^{\hat{o}}(N)$, and we have \[-\sum_{N=1}^{\infty}(-1)^{N}s_{3}^{\hat{o}}(N)q^{N/2}=\Theta_{3}(\tau)+\Theta_{3}(2\tau)-2\Theta_{3}(4\tau).\]
$(v)$ Considering only indices that are not divisible by 3 produces the series
\[\sum_{3\not|N}s_{3}^{o}(N)q^{N/2}=\Theta_{3}(\tau)+\Theta_{3}(2\tau)-\Theta_{3}(3\tau)-\Theta_{3}(6\tau),\]
\[\sum_{3\not|N}s_{3}^{\hat{o}}(N)q^{N/2}=\Theta_{3}(\tau)-\Theta_{3}(2\tau)-\Theta_{3}(3\tau)+\Theta_{3}(6\tau),\]
\[\sum_{3\not|N}s_{3}^{s}(N)q^{N/2}=\Theta_{3}(3\tau)+2\Theta_{3}(6\tau)-\Theta_{3}(\tau)-2\Theta_{3}(2\tau),\]
\[\sum_{3\not|N}s_{3}^{\hat{s}}(N)q^{N/2}=2\Theta_{3}(2\tau)-\Theta_{3}(\tau)-2\Theta_{3}(6\tau)+\Theta_{3}(3\tau),\]
\[\sum_{3\not|N}(-1)^{N}s_{3}^{o}(N)q^{N/2}=2\Theta_{3}(4\tau)-\Theta_{3}(\tau)+\Theta_{3}(2\tau)-2\Theta_{3}(12\tau)+\Theta_{3}(3\tau)-\Theta_{3}(6\tau),\]
and
\[\sum_{3\not|N}(-1)^{N}s_{3}^{\hat{o}}(N)q^{N/2}=\Theta_{3}(3\tau)+\Theta_{3}(6\tau)-2\Theta_{3}(12\tau)-\Theta_{3}(\tau)-\Theta_{3}(2\tau)+2\Theta_{3}(4\tau).\] \label{pow3twist}
\end{cor}

\begin{proof}
The fact that multiplying a divisor $d$ of $N$ by 2 inverts the sign $\big(\frac{d}{3}\big)$ or the value of $\zeta_{3}^{d}-\overline{\zeta}_{3}^{d}$ implies that if $2d|N$ then the contributions of $d$ and $2d$ to $s_{3}(N)$ cancel. Hence if the power of 2 dividing $N$ is even then one may consider the coefficient as arising only from the contributions arising either from odd $d$ or from $d$ with odd $\frac{N}{d}$, while if this power of 2 is odd then $s_{3}(N)=0$. The first assertion in part $(i)$ is then established, and the third assertion there follows as in the proof of parts $(i)$ and $(ii)$ of Corollary \ref{pow2twist}. The second assertion of that part is immediate from the invariance of these coefficients under multiplication of $N$ by 3 (by part $(iv)$ of Lemma \ref{recogser}). The analysis of the powers of 2 also implies that both $s_{3}^{o}(N)$ and $s_{3}^{\hat{o}}(N)$ equal $s_{3}(N)$ when 2 divides $N$ to an even power, while if that power is odd then $s_{3}^{o}(N)$ is $s_{3}\big(\frac{N}{2}\big)$ but $s_{3}^{\hat{o}}(N)$ equals $-s_{3}\big(\frac{N}{2}\big)$ (the equivalence of the two expressions that we used for defining both $s_{3}^{o}(N)$ and $s_{3}^{\hat{o}}(N)$ is proved precisely like the corresponding expression for $s_{3}(N)$ in $(iv)$ of Lemma \ref{recogser}). This yields part $(ii)$ as well, by adding the appropriate constant coefficients to the series. Since $s_{3}^{s}(N)$ (resp. $s_{3}^{\hat{s}}(N)$) clearly equals $s_{3}(N)-2s_{3}^{o}(N)$ (resp. $s_{3}(N)-2s_{3}^{\hat{o}}(N)$) for every $N$, part $(ii)$ implies the two formulae with the series in part $(iii)$. As $\zeta_{6}=-\overline{\zeta}_{3}$ and $\overline{\zeta}_{6}=-\zeta_{3}$, the difference $\zeta_{6}^{d}-\overline{\zeta}_{6}^{d}$ equals $-(-1)^{d}(\zeta_{3}^{d}-\overline{\zeta}_{3}^{d})$ (as is easily verified for even and odd $d$ separately), and the remaining formula in part $(iii)$ follows, as well as the first assertion in part $(iv)$. For the other sum involving $\zeta_{6}$, the sign $(-1)^{d}$ coincides with $(-1)^{N}$ when $\frac{N}{d}$ is odd, and the resulting values follow from part $(ii)$. Writing now $s_{3}^{o}(N)$ (resp. $s_{3}^{\hat{o}}(N)$) as $s_{3}^{\hat{o}}(N)$ for every $N$ minus $2s_{3}(N)$ when $N$ is odd (where both $s_{3}^{o}(N)$ and $s_{3}^{\hat{o}}(N)$ are just $s_{3}(N)$), the relevant formulae from parts $(i)$ and $(ii)$ combine (with the appropriate signs) to the asserted functions from part $(iv)$. Since the invariance of $s_{3}(N)$ under the operation of multiplying $N$ by 3 (see part $(iv)$ of Lemma \ref{recogser}, which clearly extends to the numbers $s_{3}^{o}(N)$, $s_{3}^{\hat{o}}(N)$, $s_{3}^{s}(N)$ and $s_{3}^{\hat{s}}(N)$), the formulae in part $(v)$ follow directly from subtracting the expressions from parts $(ii)$ or $(iii)$ with $\tau$ and with $3\tau$ from one another. This proves the corollary.
\end{proof}

We remark that a variant of the duplication formula from Lemma 1.6 of Chapter 2 of \cite{[FK]} exists for $\Theta_{3}$, but a product of $\theta\big[{\varepsilon \atop \delta}\big](\tau,0)$ with $\theta\big[{\lambda \atop \mu}\big](3\tau,0)$ would involve four terms (rather than two), and the variables in the resulting multipliers would be $4\tau$ and $12\tau$. Using Equation \eqref{pardecom} and its variants, as well as the theta identities from Corollary \ref{Foureta1/2} below, one could show that the combinations $\Theta_{3}(\tau)+2\Theta_{3}(4\tau)$ and $4\Theta_{3}(4\tau)-\Theta_{3}(\tau)$ reduce to $3\theta\big[{0 \atop 0}\big](\tau,0)\theta\big[{0 \atop 0}\big](3\tau,0)$ and $3\theta\big[{0 \atop 1}\big](\tau,0)\theta\big[{0 \atop 1}\big](3\tau,0)$ respectively. However, since this simplifies only very few expressions in Corollary \ref{pow3twist}, we shall leave the expressions from that corollary (and its consequences in Section \ref{Charin1/3Z} below) as they are in what follows.

\section{Theta Derivatives with Characteristics from $\frac{1}{4}\mathbb{Z}$ \label{Charin1/4Z}}

Theorem 1 of \cite{[M1]} identifies the theta derivative $\theta\big[{1 \atop 1/2}\big]'(\tau,0)$ as the expression $-\pi\theta\big[{1 \atop 1/2}\big](\tau,0)\theta\big[{0 \atop 0}\big]^{2}(2\tau,0)$. Indeed, when one substitutes the values $\varepsilon=1$ and $\delta=\frac{1}{2}$ (hence $\beta=-\mathbf{e}\big(-\frac{\delta}{2}\big)=i$) into the expression from Equation \eqref{logdereps1}, and writes $N=nl$ and $d=l$, the terms with even $d$ cancel out, the ones with odd $d$ give $+4\pi i \cdot i^{d}$, and the constant coefficient $2\pi i\frac{1+i}{2(1-i)}$ equals $-\pi$. The assertion thus follows from part $(i)$ of Lemma \ref{recogser} (but note that the power series is with $q^{N}$ rather than $q^{N/2}$). Let us now show how these formulae can be used for establishing expressions for some other theta derivatives.
\begin{thm}
Explicit expressions for the theta derivatives $\theta\big[{0 \atop 1/2}\big]'(\tau,0)$ and $\theta\big[{1/2 \atop 1}\big]'(\tau,0)$ are given by
\begin{equation}
-\pi\theta\big[{\textstyle{0 \atop 1/2}}\big](\tau,0)\Big(\theta\big[{\textstyle{0 \atop 0}}\big]^{2}(\tau,0)-\theta\big[{\textstyle{0 \atop 0}}\big]^{2}(2\tau,0)\Big)=-\pi\theta\big[{\textstyle{0 \atop 1/2}}\big](\tau,0)\theta\big[{\textstyle{1 \atop 0}}\big]^{2}(2\tau,0), \label{01/2}
\end{equation}
and
\begin{equation}
\tfrac{\pi i}{2}\theta\big[{\textstyle{1/2 \atop 1}}\big](\tau,0)\theta\big[{\textstyle{0 \atop 0}}\big]^{2}\big(\tfrac{\tau}{2},0\big)=
\tfrac{\pi i}{2}\theta\big[{\textstyle{1/2 \atop 1}}\big](\tau,0)\Big(\theta\big[{\textstyle{0 \atop 0}}\big]^{2}(\tau,0)+\theta\big[{\textstyle{1 \atop 0}}\big]^{2}(\tau,0)\Big) \label{1/21}
\end{equation}
respectively, and the equality
\begin{equation}
\theta\big[{\textstyle{1/2 \atop 0}}\big]'(\tau,0)=\tfrac{\pi i}{2}\theta\big[{\textstyle{1/2 \atop 0}}\big](\tau,0)\Big(\theta\big[{\textstyle{0 \atop 0}}\big]^{2}(\tau,0)-\theta\big[{\textstyle{1 \atop 0}}\big]^{2}(\tau,0)\Big) \label{1/20}
\end{equation}
also holds. In addition, we have the equalities
\begin{equation}
\theta\big[{\textstyle{1/2 \atop 1/2}}\big]'(\tau,0)=\tfrac{\pi i}{2}\theta\big[{\textstyle{1/2 \atop 1/2}}\big](\tau,0)\Big(\theta\big[{\textstyle{0 \atop 0}}\big]^{2}(2\tau,0)-\theta\big[{\textstyle{1 \atop 0}}\big]^{2}(2\tau,0)+i\theta\big[{\textstyle{1 \atop 0}}\big]^{2}(\tau,0)\Big) \label{1/21/2}
\end{equation}
and
\begin{equation}
\theta\big[{\textstyle{1/2 \atop 3/2}}\big]'(\tau,0)=\tfrac{\pi i}{2}\theta\big[{\textstyle{1/2 \atop 3/2}}\big](\tau,0)\Big(\theta\big[{\textstyle{0 \atop 0}}\big]^{2}(2\tau,0)-\theta\big[{\textstyle{1 \atop 0}}\big]^{2}(2\tau,0)-i\theta\big[{\textstyle{1 \atop 0}}\big]^{2}(\tau,0\big)\Big). \label{1/23/2}
\end{equation} \label{2chinZ}
\end{thm}

\begin{proof}
With $\varepsilon=0$ and $\delta=\frac{1}{2}$ (hence with $\beta=i$ again), Equation \eqref{logdergen} takes the form $2\pi i\sum_{n=1}^{\infty}\sum_{l=1}^{\infty}\big(i^{l}-(-i)^{l}\big)q^{l(n-1/2)}$. Again the terms with even $l$ vanish, so that only powers $q^{N/2}$ with odd $N$ survive. As the coefficient of $q^{N/2}$ is $-\pi s_{1}(N)$ by part $(i)$ of Lemma \ref{recogser}, Equation \eqref{01/2} follows from part $(i)$ of Corollary \ref{pow2twist}. For Equation \eqref{1/21} we set $\varepsilon=\frac{1}{2}$ and $\delta=1$ (so that $\beta=1$) in Equation \eqref{logdergen}, and get a constant coefficient of $\frac{\pi i}{2}$, terms of the form $q^{l(4n-3)/4}$ with the coefficient $2\pi i$, and terms $q^{l(4n-1)/4}$ multiplied by $-2\pi i$. Writing the exponent of $q$ as $\frac{N}{4}$ and taking the divisor $d$ to be $4n-3$ or $4n-1$ (positive, with residues 1 and 3 modulo 4 respectively), the resulting coefficient of $q^{N/4}$ (for every $N$) is $\frac{\pi i}{2}s_{1}(N)$ by another application of part $(i)$ of Lemma \ref{recogser}, proving the desired result via part $(i)$ of Proposition \ref{theta1}. Now replace $\delta$ from 1 to 0, hence $\beta$ from 1 to $-1$, in the latter argument, so that each term involving an index $l$ is multiplied by $(-1)^{l}$. Since $N$ is $l$ times an odd number, this sign is the same as $(-1)^{N}$, and Equation \eqref{1/20} thus follows from part $(i)$ of Corollary \ref{pow2twist}.

Turning to the case where $\varepsilon=\frac{1}{2}$ and $\delta$ is either $\frac{1}{2}$ or $\frac{3}{2}$, a parameter that we shall write as $\frac{2\mp1}{2}$, we find that $\beta=\pm i$. An argument similar to the previous paragraph yields the same constant coefficient, while now $q^{l(4n-3)/4}$ comes multiplied by $2\pi i(\pm i)^{l}$ and $q^{l(4n-1)/4}$ has a multiplier of $-2\pi i(\mp i)^{l}$. The terms with even $l$ (or equivalently even $N$) can be analyzed just in the proof of Equation \eqref{1/20} (but with the variable multiplied by 2---we are also implicitly using the last assertion of part $(i)$ of Lemma \ref{recogser}), thus yielding the common terms in Equations \eqref{1/21/2} and \eqref{1/23/2}. For odd $N$ we can take the sign out of the powers, and obtain a coefficient of $\pm2\pi i \cdot i^{l}$ in front of both terms $q^{l(4n-3)/4}$ and $q^{l(4n-1)/4}$, i.e., in front of any term of the form $q^{l(2m-1)/4}$ with non-negative $m$ (and odd $l$). The resulting coefficient of $q^{N/4}$ (for odd $N$) is again $\mp\frac{\pi}{2}s_{1}(N)=\frac{\pi i}{2}\cdot\pm is_{1}(N)$ by part $(i)$ of Lemma \ref{recogser} (with $d=l$), which yields the remaining terms in Equations \eqref{1/21/2} and \eqref{1/23/2} by part $(i)$ of Corollary \ref{pow2twist}. This completes the proof of the theorem.
\end{proof}
We remark that Equation \eqref{01/2} appears as Equation (3.1) of \cite{[M3]}.

We recall that \cite{[M1]} applies its Theorem 1 for obtaining the Fourier expansion
\begin{equation}
\frac{\eta^{9}(2\tau)}{\eta^{3}(\tau)\eta^{3}(4\tau)}=\sum_{n=1}^{\infty}(-1)^{n(n-1)/2}(2n+1)q^{(2n+1)^{2}/8}=\sum_{n=1}^{\infty}n\big(\tfrac{-2}{n}\big)q^{n^{2}/8} \label{eta3strans}
\end{equation}
(see Equation (8) of that reference). Equation \eqref{eta3strans} also appears (up to scalar multiplication of the variable) in Theorem 1.2 of \cite{[LO]} and in Equation (8.16) in Theorem 8.5 of \cite{[K]}. It can also be seen, in the language of Section 1.2 of \cite{[K]} and others, as the \emph{sign transform} of the series for $\eta^{3}(\tau)$ appearing in our Equation \eqref{eta3exp}. The idea is that arguments similar to the proof of Proposition \ref{intchar} combine with the fact that the product of $1-iq^{n}$ and $1+iq^{n}$ is $1+q^{2n}$ to show that $\theta\big[{1 \atop 1/2}\big](\tau,0)$ is the eta quotient $\sqrt{2}\frac{\eta(\tau)\eta(4\tau)}{\eta(2\tau)}$ (the leading coefficient is the product of $\zeta_{8}$ and $1-i$). This, together with Proposition \ref{intchar} itself and the derivative of Equation \eqref{defser} (with some summation index change), yields Equation \eqref{eta3strans}.

The argument proving the equality $\theta\big[{1 \atop 1/2}\big](\tau,0)=\sqrt{2}\frac{\eta(\tau)\eta(4\tau)}{\eta(2\tau)}$, together with the observation that the product $\prod_{n=1}^{\infty}(1 \pm q^{n-1/4})(1 \pm q^{n-3/4})$ reduces to $\prod_{n=1}^{\infty}(1 \pm q^{(2n-1)/4})$, combine to establish the following result, which is essentially contained, via the expansions of theta series, in Theorem 1.1 of \cite{[LO]} and Theorem 8.1 of \cite{[K]} (up to the usual rescalings).
\begin{prop}
The following equalities hold: \[\theta\big[{\textstyle{0 \atop 1/2}}\big](\tau,0)=\frac{\eta^{2}(2\tau)}{\eta(4\tau)},\ \theta\big[{\textstyle{1/2 \atop 1}}\big](\tau,0)=\zeta_{8}\frac{\eta(\tau)\eta\big(\frac{\tau}{4}\big)}{\eta\big(\frac{\tau}{2}\big)},\mathrm{\ and\ }\theta\big[{\textstyle{1/2 \atop 0}}\big](\tau,0)=\frac{\eta^{2}\big(\frac{\tau}{2}\big)}{\eta\big(\frac{\tau}{4}\big)}.\] \label{halfchar}
\end{prop}

From this we deduce some identities.
\begin{cor}
The theta constants with half-integral characteristics satisfy the identities $\theta\big[{1 \atop 1/2}\big](\tau,0)=(1-i)\theta\big[{1/2 \atop 1}\big](4\tau,0)$, $\theta\big[{0 \atop 1/2}\big](\tau,0)=\theta\big[{0 \atop 1}\big](4\tau,0)$, and $\theta\big[{1 \atop 0}\big](\tau,0)=2\theta\big[{1/2 \atop 0}\big](4\tau,0)$. We also have the eta identities
\[\frac{\eta^{14}(\tau)\eta^{4}(4\tau)-\eta^{14}(2\tau)\eta^{4}\big(\frac{\tau}{2}\big)}{\eta^{4}\big(\frac{\tau}{2}\big)\eta^{4}(\tau)\eta^{2}(2\tau)\eta^{5}(4\tau)}=4\eta^{3}(4\tau),\]
\[\eta\big(\tfrac{\tau}{4}\big)\frac{\eta^{12}(\tau)+4\eta^{8}(2\tau)\eta^{4}\big(\frac{\tau}{2}\big)}{\eta^{5}\big(\frac{\tau}{2}\big)\eta(\tau)\eta^{4}(2\tau)}=
\frac{\eta^{9}\big(\frac{\tau}{2}\big)}{\eta^{3}(\tau)\eta^{3}\big(\frac{\tau}{4}\big)},\]
and
\[\frac{\eta^{12}(\tau)-\eta^{8}(2\tau)\eta^{4}\big(\frac{\tau}{2}\big)}{\eta\big(\frac{\tau}{4}\big)\eta^{2}\big(\frac{\tau}{2}\big)\eta^{2}(\tau)\eta^{4}(2\tau)}=\eta^{3}\big(\tfrac{\tau}{4}).\] \label{Foureta1/2}
\end{cor}

\begin{proof}
The theta constant identities are immediate consequences of Propositions \ref{intchar} and \ref{halfchar} (recall that $\frac{\sqrt{2}}{\zeta_{8}}=1-i$). On the other hand, the derivative of Equation \eqref{defser} shows that the Fourier expansions of the theta derivatives $\theta\big[{0 \atop 1/2}\big]'(\tau,0)$, $\theta\big[{1/2 \atop 1}\big]'(\tau,0)$, and $\theta\big[{1/2 \atop 0}\big]'(\tau,0)$ are $-4\pi\sum_{n=0}^{\infty}n\big(\tfrac{-4}{n}\big)q^{n^{2}/2}$, $\frac{\pi i}{2}\zeta_{8}\sum_{n=1}^{\infty}n\big(\tfrac{-2}{n}\big)q^{n^{2}/32}$ and $\frac{\pi i}{2}\sum_{n=0}^{\infty}n\big(\tfrac{-4}{n}\big)q^{n^{2}/32}$ respectively (due to cancelations for even $n$ in the first series and a change of the summation index in the other two). But as these series are $-4\pi\eta^{3}(4\tau)$, $\frac{\pi i}{2}\zeta_{8}\frac{\eta^{9}(\tau/2)}{\eta^{3}(\tau)\eta^{3}(\tau/4)}$, and $\frac{\pi i}{2}\eta^{3}\big(\frac{\tau}{4})$ respectively by Equations \eqref{eta3exp} and \eqref{eta3strans}, we substitute these expressions into Equations \eqref{01/2}, \eqref{1/21}, and \eqref{1/20} respectively, divide by the relevant scalar ($-\pi$, $\frac{\pi i}{2}\zeta_{8}$, or $\frac{\pi i}{2}$), and express the right hand side of the appropriate equation using Propositions \ref{intchar} and \ref{halfchar}. This proves the corollary.
\end{proof}
The second expression in Equation \eqref{01/2}, with $\theta\big[{1 \atop 0}\big]^{2}(2\tau,0)$, indeed produces $4\eta^{3}(4\tau)$ again. Similarly, the expression involving $\theta\big[{0 \atop 0}\big]^{2}\big(\tfrac{\tau}{2},0\big)$ in Equation \eqref{1/21} yields indeed $\zeta_{8}\frac{\eta^{9}(\tau/2)}{\eta^{3}(\tau)\eta^{3}(\tau/4)}$. Unfortunately, the theta constants appearing in Equations \eqref{1/21/2} and \eqref{1/23/2} (as well as in all the formulae from Theorems \ref{4deltainZ} and \ref{4epsinZ}) cannot be expressed as eta quotients via such an argument (this is essentially proved in \cite{[LO]}). It is possible that the eta identities from Corollary \ref{Foureta1/2} can be deduced from Equation \eqref{etaid}, and that they are dependent of one another, but I have not checked these statements in detail.

\smallskip

Theorem 2 of \cite{[M1]} concerns the two theta derivatives $\theta\big[{1 \atop 1/4}\big]'(\tau,0)$ and $\theta\big[{1 \atop 3/4}\big]'(\tau,0)$, which we write as $\theta\big[{1 \atop (2\mp1)/4}\big]'(\tau,0)$ with the two possible signs. The two statements of that theorem can be written together as \[\theta\big[{\textstyle{1 \atop (2\mp1)/4}}\big]'(\tau,0)=-\pi\theta\big[{\textstyle{1 \atop (2\mp1)/4}}\big](\tau,0)\theta\big[{\textstyle{0 \atop 0}}\big](4\tau,0)\Big(\sqrt{2}\theta\big[{\textstyle{0 \atop 0}}\big](2\tau,0)\mp\theta\big[{\textstyle{0 \atop 0}}\big](4\tau,0)\Big).\] The proof goes along the same lines (just expressed a bit differently in \cite{[M1]}): With $\varepsilon=1$ and $\delta=\frac{2\mp1}{4}$ the parameter $\beta$ is $\zeta_{8}^{2\pm1}$, which we write as $\zeta_{8}^{h}$ for $h$ being 3 or 1 respectively. Then the equality $(1-\zeta_{8})(1-\zeta_{8}^{3})=-\sqrt{2}i$ implies that the constant $\pi i\frac{1+\zeta_{8}^{h}}{1-\zeta_{8}^{h}}$ from Equation \eqref{logdereps1} equals $-\pi(\sqrt{2}\pm1)$. For positive $N$ the coefficient multiplying $q^{N}$ in that equation is $2\pi i\sum_{l|N}(\zeta_{8}^{hl}-\overline{\zeta}_{8}^{hl})$ with our value of $\beta$, so that part $(ii)$ of Lemma \ref{recogser} implies that the odd divisors $l$ of $N$ contribute $-\sqrt{2}\pi s_{2}(N)$ (since both $h=3$ and $h=1$ satisfy $\big(\frac{-2}{h}\big)=+1$). For even $N$ we also have to consider the contributions of even divisors $l$, where the observation that even powers of $(\zeta_{8}^{h})^{2}$ are powers of $i^{h}=(-1)^{(h-1)/2}i$ combines with part $(i)$ of Lemma \ref{recogser} to show that these contributions produce $(-1)^{(h-1)/2}=\mp1$ times $-\pi s_{1}\big(\frac{N}{2}\big)$ (or equivalently $-\pi s_{1}(N)$). Expressing the resulting linear combination of $\Theta_{1}(2\tau)$ and $\Theta_{2}(2\tau)$ as in Proposition \ref{theta1} establishes the desired result. Note that these expressions can be compared with Theorem 4.2 of \cite{[M2]}, resulting in some interesting theta constant identities. On the other hand, several other theta derivatives can be determined in the manner just described.
\begin{thm}
Two theta derivatives with $\varepsilon=0$ are expressed in
\begin{equation}
\theta\big[{\textstyle{0 \atop 1/4}}\big]'(\tau,0)=-\pi\theta\big[{\textstyle{0 \atop 1/4}}\big](\tau,0)\Big(\sqrt{2}\theta\big[{\textstyle{0 \atop 0}}\big](2\tau,0)\theta\big[{\textstyle{1 \atop 0}}\big](4\tau,0)-\theta\big[{\textstyle{1 \atop 0}}\big]^{2}(4\tau,0)\Big) \label{01/4}
\end{equation}
and
\begin{equation}
\theta\big[{\textstyle{0 \atop 3/4}}\big]'(\tau,0)=-\pi\theta\big[{\textstyle{0 \atop 3/4}}\big](\tau,0)\Big(\sqrt{2}\theta\big[{\textstyle{0 \atop 0}}\big](2\tau,0)\theta\big[{\textstyle{1 \atop 0}}\big](4\tau,0)+\theta\big[{\textstyle{1 \atop 0}}\big]^{2}(4\tau,0)\Big). \label{03/4}
\end{equation}
In addition, four other theta derivatives are given by the formulae
\[\theta\big[{\textstyle{1/2 \atop 1/4}}\big]'(\tau,0)=\tfrac{\pi i}{2}\theta\big[{\textstyle{1/2 \atop 1/4}}\big](\tau,0)\Big[\theta\big[{\textstyle{0 \atop 0}}\big]^{2}(4\tau,0)-\theta\big[{\textstyle{1 \atop 0}}\big]^{2}(4\tau,0)+\]
\begin{equation}
-i\theta\big[{\textstyle{1 \atop 0}}\big]^{2}(2\tau,0)-\sqrt{2}\theta\big[{\textstyle{1 \atop 0}}\big](2\tau,0)\Big(\theta\big[{\textstyle{0 \atop 1}}\big](\tau,0)-i\theta\big[{\textstyle{0 \atop 0}}\big](\tau,0)\Big)\Big], \label{1/21/4}
\end{equation}
\[\theta\big[{\textstyle{1/2 \atop 3/4}}\big]'(\tau,0)=\tfrac{\pi i}{2}\theta\big[{\textstyle{1/2 \atop 3/4}}\big](\tau,0)\Big[\theta\big[{\textstyle{0 \atop 0}}\big]^{2}(4\tau,0)-\theta\big[{\textstyle{1 \atop 0}}\big]^{2}(4\tau,0)+\]
\begin{equation}
+i\theta\big[{\textstyle{1 \atop 0}}\big]^{2}(2\tau,0)+\sqrt{2}\theta\big[{\textstyle{1 \atop 0}}\big](2\tau,0)\Big(\theta\big[{\textstyle{0 \atop 1}}\big](\tau,0)+i\theta\big[{\textstyle{0 \atop 0}}\big](\tau,0)\Big)\Big], \label{1/23/4}
\end{equation}
\[\theta\big[{\textstyle{1/2 \atop 5/4}}\big]'(\tau,0)=\tfrac{\pi i}{2}\theta\big[{\textstyle{1/2 \atop 5/4}}\big](\tau,0)\Big[\theta\big[{\textstyle{0 \atop 0}}\big]^{2}(4\tau,0)-\theta\big[{\textstyle{1 \atop 0}}\big]^{2}(4\tau,0)+\]
\begin{equation}
-i\theta\big[{\textstyle{1 \atop 0}}\big]^{2}(2\tau,0)+\sqrt{2}\theta\big[{\textstyle{1 \atop 0}}\big](2\tau,0)\Big(\theta\big[{\textstyle{0 \atop 1}}\big](\tau,0)-i\theta\big[{\textstyle{0 \atop 0}}\big](\tau,0)\Big)\Big], \label{1/25/4}
\end{equation}
and \[\theta\big[{\textstyle{1/2 \atop 7/4}}\big]'(\tau,0)=\tfrac{\pi i}{2}\theta\big[{\textstyle{1/2 \atop 7/4}}\big](\tau,0)\Big[\theta\big[{\textstyle{0 \atop 0}}\big]^{2}(4\tau,0)-\theta\big[{\textstyle{1 \atop 0}}\big]^{2}(4\tau,0)+\]
\begin{equation}
+i\theta\big[{\textstyle{1 \atop 0}}\big]^{2}(2\tau,0)-\sqrt{2}\theta\big[{\textstyle{1 \atop 0}}\big](2\tau,0)\Big(\theta\big[{\textstyle{0 \atop 1}}\big](\tau,0)+i\theta\big[{\textstyle{0 \atop 0}}\big](\tau,0)\Big)\Big]. \label{1/27/4}
\end{equation} \label{4deltainZ}
\end{thm}

\begin{proof}
We write the first two characteristics as $\big[{0 \atop (2\mp1)/4}\big]$, and substitute into Equation \eqref{logdergen} the values $\varepsilon=0$ and $\delta=\frac{2\mp1}{4}$ (whence $\beta=\zeta_{8}^{2\pm1}=\zeta_{8}^{h}$ as in our description of the proof of Theorem 2 of \cite{[M1]}). This yields no constant term, and if we write $q^{l(2n-1)/2}$ as $q^{N/2}$ then only divisors $l$ of $N$ with odd $\frac{N}{l}$ have to be considered. Therefore if we write $N$ as $2^{a}N_{2}$ with odd $N_{2}$ then $l=2^{a}d$ for $d|N_{2}$ and the coefficient of $q^{N/2}$ is $2\pi i\sum_{d|N_{2}}(\zeta_{8}^{2^{a}dh}-\overline{\zeta}_{8}^{2^{a}dh})$. For odd $N$ the proof of Theorem 2 of \cite{[M1]} given above determines this coefficient as $-\sqrt{2}\pi s_{2}(N)$ for both values of $h$ (i.e., with both signs), and since these coefficients appear only for odd $N$, part $(ii)$ of Corollary \ref{pow2twist} implies that this part of the expansion gives the first term in Equations \eqref{01/4} and \eqref{03/4}. The coefficient in front of $q^{N/2}$ vanishes if $4|N$, while if $N\equiv2(\mathrm{mod\ }4)$ then it equals $\mp\pi s_{1}\big(\frac{N}{2}\big)$, or equivalently $\mp\pi s_{1}(N)$, by part $(i)$ of Lemma \ref{recogser}. But since only odd powers $\frac{N}{2}$ appear, part $(i)$ of Corollary \ref{pow2twist} yields the last terms in Equations \eqref{01/4} and \eqref{03/4}.

We now turn to the remaining 4 characteristics, which we write as $\big[{1/2 \atop (12-t)/4}\big]$ for $t$ being 5, and 7, 9, or 11. Then $\beta$ is just $\zeta_{8}^{t}$, and together with $\varepsilon=\frac{1}{2}$ the expression appearing in Equation \eqref{logdergen} yields the series with the constant term $\frac{\pi i}{2}$, and with terms of the form $2\pi i\zeta_{8}^{tl}q^{l(4n-3)/4}$ as well as $-2\pi i\overline{\zeta}_{8}^{tl}q^{l(4n-1)/4}$. Once again we look for the coefficient of $q^{N/4}$, and the parity of $N$ coincides with that of $l$. We first consider the terms associated with even indices $l$ (hence also $N$), where by writing $l=2m$ and $\zeta_{8}^{2t}=\pm i$ one easily sees that the proof of Equations \eqref{1/21/2} and \eqref{1/23/2} in Theorem \ref{2chinZ} transforms this part to the terms appearing in the first lines of Equations \eqref{1/21/4}, \eqref{1/23/4}, \eqref{1/25/4}, and \eqref{1/27/4}. Indeed, the description of the coefficients is the same (but with the powers of $q$ halved), and one just notes that here the characteristics $\big[{1/2 \atop 3/4}\big]$ and $\big[{1/2 \atop 7/4}\big]$ (where $t$ is 9 or 5) yield the $+$ sign, while with $\big[{1/2 \atop 1/4}\big]$ and $\big[{1/2 \atop 5/4}\big]$ (i.e., if $t$ is 11 or 7) the sign is $-$. For odd $N$ we write the coefficient $2\pi i\zeta_{8}^{tl}$ arising from a divisor $l$ of $N$ with $\frac{N}{l}\equiv1(\mathrm{mod\ 4})$ as the sum of $\pi i(\zeta_{8}^{tl}-\overline{\zeta}_{8}^{tl})$ and $\pi i(\zeta_{8}^{tl}+\overline{\zeta}_{8}^{tl})$, while for divisors with $\frac{N}{l}\equiv3(\mathrm{mod\ 4})$ the coefficient $-2\pi i\overline{\zeta}_{8}^{tl}$ equals the difference between $\pi i(\zeta_{8}^{tl}-\overline{\zeta}_{8}^{tl})$ and $\pi i(\zeta_{8}^{tl}+\overline{\zeta}_{8}^{tl})$. The terms involving $\zeta_{8}^{tl}-\overline{\zeta}_{8}^{tl}$ combine to $\frac{\pi i}{2}\cdot\sqrt{2}i\big(\frac{-2}{t}\big)s_{2}(N)$ by part $(ii)$ of Lemma \ref{recogser}, and since these expressions appear only for odd $N$, part $(ii)$ of Corollary \ref{pow2twist} shows that they produce the term containing $i$ in the rightmost brackets in all the four Equations \eqref{1/21/4}, \eqref{1/23/4}, \eqref{1/25/4}, and \eqref{1/27/4} (the sign is determined by the value $t$ such that $\delta=\frac{12-t}{4}$, and the variable $\tau$ still halved). On the other hand, in the terms with $\zeta_{8}^{tl}-\overline{\zeta}_{8}^{tl}$ we have the multiplier $\big(\frac{-1}{N/l}\big)$ (because of the sign depending on the residue of $\frac{N}{l}$ modulo 4), which we write as the product of $\big(\frac{-1}{N}\big)$ and $\big(\frac{-1}{l}\big)$. Then part $(ii)$ of Lemma \ref{recogser} identifies the resulting coefficient as $\frac{\pi i}{2}\cdot\sqrt{2}\big(\frac{2}{t}\big)\big(\frac{-1}{N}\big)s_{2}(N)$, which according to part $(iii)$ of Corollary \ref{pow2twist} produce the remaining terms in the four equations in question (again considering the sign depending on the value of $t$ and the halving of $\tau$). This completes the proof of the theorem.
\end{proof}
We note that our Equations \eqref{01/4} and \eqref{03/4} appear in Theorem 7.7 of \cite{[M3]} as well.

Some theta derivatives in which $\varepsilon$ is $\frac{1}{4}$ or $\frac{3}{4}$ can also be evaluated using this method:
\begin{thm}
The equalities
\begin{equation}
\theta\big[{\textstyle{1/4 \atop 1}}\big]'(\tau,0)=\tfrac{\pi i}{4}\theta\big[{\textstyle{1/4 \atop 1}}\big](\tau,0)\theta\big[{\textstyle{0 \atop 0}}\big]\big(\tfrac{\tau}{4},0\big)\Big(2\theta\big[{\textstyle{0 \atop 0}}\big]\big(\tfrac{\tau}{2},0\big)-\theta\big[{\textstyle{0 \atop 0}}\big]\big(\tfrac{\tau}{4},0\big)\Big) \label{1/41}
\end{equation}
and
\begin{equation}
\theta\big[{\textstyle{3/4 \atop 1}}\big]'(\tau,0)=\tfrac{\pi i}{4}\theta\big[{\textstyle{3/4 \atop 1}}\big](\tau,0)\theta\big[{\textstyle{0 \atop 0}}\big]\big(\tfrac{\tau}{4},0\big)\Big(2\theta\big[{\textstyle{0 \atop 0}}\big]\big(\tfrac{\tau}{2},0\big)+\theta\big[{\textstyle{0 \atop 0}}\big]\big(\tfrac{\tau}{4},0\big)\Big) \label{3/41}
\end{equation}
hold. Moreover, for $\theta\big[{1/4 \atop 0}\big]'(\tau,0)$ and $\theta\big[{3/4 \atop 0}\big]'(\tau,0)$ we have the respective expressions
\begin{equation}
\tfrac{\pi i}{4}\theta\big[{\textstyle{1/4 \atop 0}}\big](\tau,0)\Big(2\theta\big[{\textstyle{0 \atop 0}}\big]\big(\tfrac{\tau}{2},0\big)\theta\big[{\textstyle{0 \atop 1}}\big]\big(\tfrac{\tau}{4},0\big)-\theta\big[{\textstyle{0 \atop 0}}\big]^{2}\big(\tfrac{\tau}{2},0\big)+\theta\big[{\textstyle{1 \atop 0}}\big]^{2}\big(\tfrac{\tau}{2},0\big)\Big) \label{1/40}
\end{equation}
and
\begin{equation}
\tfrac{\pi i}{4}\theta\big[{\textstyle{3/4 \atop 0}}\big](\tau,0)\Big(2\theta\big[{\textstyle{0 \atop 0}}\big]\big(\tfrac{\tau}{2},0\big)\theta\big[{\textstyle{0 \atop 1}}\big]\big(\tfrac{\tau}{4},0\big)+\theta\big[{\textstyle{0 \atop 0}}\big]^{2}\big(\tfrac{\tau}{2},0\big)-\theta\big[{\textstyle{1 \atop 0}}\big]^{2}\big(\tfrac{\tau}{2},0\big)\Big). \label{3/40}
\end{equation}
Four additional theta derivatives are expressed by the formulae
\[\theta\big[{\textstyle{1/4 \atop 1/2}}\big]'(\tau,0)=\tfrac{\pi i}{4}\theta\big[{\textstyle{1/4 \atop 1/2}}\big](\tau,0)\Big[2\theta\big[{\textstyle{0 \atop 1}}\big]\big(\tfrac{\tau}{2},0\big)\Big(\theta\big[{\textstyle{0 \atop 0}}\big](\tau,0)-i\theta\big[{\textstyle{1 \atop 0}}\big](\tau,0)\Big)+\]
\begin{equation}
-\theta\big[{\textstyle{0 \atop 0}}\big]^{2}(\tau,0)+\theta\big[{\textstyle{1 \atop 0}}\big]^{2}(\tau,0\big)+i\theta\big[{\textstyle{1 \atop 0}}\big]^{2}\big(\tfrac{\tau}{2},0\big)\Big], \label{1/41/2}
\end{equation}
\[\theta\big[{\textstyle{1/4 \atop 3/2}}\big]'(\tau,0)=\tfrac{\pi i}{4}\theta\big[{\textstyle{1/4 \atop 3/2}}\big](\tau,0)\Big[2\theta\big[{\textstyle{0 \atop 1}}\big]\big(\tfrac{\tau}{2},0\big)\Big(\theta\big[{\textstyle{0 \atop 0}}\big](\tau,0)+i\theta\big[{\textstyle{1 \atop 0}}\big](\tau,0)\Big)+\]
\begin{equation}
-\theta\big[{\textstyle{0 \atop 0}}\big]^{2}(\tau,0)+\theta\big[{\textstyle{1 \atop 0}}\big]^{2}(\tau,0)-i\theta\big[{\textstyle{1 \atop 0}}\big]^{2}\big(\tfrac{\tau}{2},0\big)\Big], \label{1/43/2}
\end{equation}
\[\theta\big[{\textstyle{3/4 \atop 1/2}}\big]'(\tau,0)=\tfrac{\pi i}{4}\theta\big[{\textstyle{3/4 \atop 1/2}}\big](\tau,0)\Big[2\theta\big[{\textstyle{0 \atop 1}}\big]\big(\tfrac{\tau}{2},0\big)\Big(\theta\big[{\textstyle{0 \atop 0}}\big](\tau,0)+i\theta\big[{\textstyle{1 \atop 0}}\big](\tau,0)\Big)+\]
\begin{equation}
+\theta\big[{\textstyle{0 \atop 0}}\big]^{2}(\tau,0)-\theta\big[{\textstyle{1 \atop 0}}\big]^{2}(\tau,0)+i\theta\big[{\textstyle{1 \atop 0}}\big]^{2}\big(\tfrac{\tau}{2},0\big)\Big], \label{3/41/2}
\end{equation}
and \[\theta\big[{\textstyle{3/4 \atop 3/2}}\big]'(\tau,0)=\tfrac{\pi i}{4}\theta\big[{\textstyle{3/4 \atop 3/2}}\big](\tau,0)\Big[2\theta\big[{\textstyle{0 \atop 1}}\big]\big(\tfrac{\tau}{2},0\big)\Big(\theta\big[{\textstyle{0 \atop 0}}\big](\tau,0)-i\theta\big[{\textstyle{1 \atop 0}}\big](\tau,0)\Big)+\]
\begin{equation}
+\theta\big[{\textstyle{0 \atop 0}}\big]^{2}(\tau,0)-\theta\big[{\textstyle{1 \atop 0}}\big]^{2}(\tau,0)-i\theta\big[{\textstyle{1 \atop 0}}\big]^{2}\big(\tfrac{\tau}{2},0\big)\Big]. \label{3/43/2}
\end{equation} \label{4epsinZ}
\end{thm}

\begin{proof}
With $\varepsilon=\frac{1}{4}$ (resp. $\varepsilon=\frac{3}{4}$) and $\delta=1$ (i.e., $\beta=1$) Equation \eqref{logdergen} contains the constant coefficient $\frac{\pi i}{4}$ (resp. $\frac{3\pi i}{4}$), together with the terms $2\pi iq^{l(8n-5)/8}$ (resp. $2\pi iq^{l(8n-7)/8}$) and $-2\pi iq^{l(8n-3)/8}$ (resp. $-2\pi iq^{l(8n-1)/8}$) with positive natural $n$ and $l$. Since part $(iii)$ of Lemma \ref{recogser}, with the (odd) divisor $d$ being $\frac{N}{l}$, recognizes the resulting coefficient of $q^{N/8}$ as $\frac{\pi i}{2}s_{2}(N)-\frac{\pi i}{4}s_{1}(N)$ (resp. $\frac{\pi i}{2}s_{2}(N)+\frac{\pi i}{4}s_{1}(N)$), this proves Equations \eqref{1/41} and \eqref{3/41} (note that $\tau$ must be divided by 4 for such a power of $q$). Replacing the value of $\delta$ to be 0 (hence $\beta$ becomes $-1$) results in multiplying any term with index $l$ by $(-1)^{l}$, which is the same as $(-1)^{N}$ since $\frac{N}{l}$ is odd. Combining what we just evaluated with parts $(i)$ and $(ii)$ of Corollary \ref{pow2twist} then establishes Equations \eqref{1/40} and \eqref{3/40}.

We now turn to the remaining four theta derivatives, where $\delta$ is $\frac{1}{2}$ or $\frac{3}{2}$, hence $\beta=\pm i$. Hence in the series from Equation \eqref{logdergen}, any $q$-power involving $l$ is now multiplied by a sign times $2\pi i(\pm i)^{l}$. As in the proof of the Equations \eqref{1/21/4}, \eqref{1/23/4}, \eqref{1/25/4}, and \eqref{1/27/4} in Theorem \ref{4deltainZ}, we separate this series according to the parity of $l$ (or equivalently $N$), and observe that the coefficient for $q^{N/8}$ with even $N$ here becomes precisely the expression for the coefficient of $q^{(N/2)/8}$ in the evaluation that lead to Equations \eqref{1/40} and \eqref{3/40} (with the same $\varepsilon$). This yields, after the resulting duplication of the variable $\tau$, the terms not involving $i$ inside the parentheses in each of the Equations \eqref{1/41/2}, \eqref{1/43/2}, \eqref{3/41/2}, and \eqref{3/43/2}. For the terms with odd $l$ we note that the coefficient $-(\mp i)^{l}$ is the same as $(\pm i)^{l}=\pm i^{l}$, so that for odd $N$ we have to take the sum of $\pm2\pi i \cdot i^{l}$ over all the relevant divisors $l$ of $N$ (i.e., in which $\frac{N}{l}$ is congruent to 3 or to 5 modulo 8 if $\varepsilon=\frac{1}{4}$, and for which $\frac{N}{l}$ has residue 1 or 7 modulo 8 when $\varepsilon=\frac{3}{4}$). Since $i \cdot i^{l}$ equals $i \cdot i^{N}=-\big(\frac{-1}{N}\big)$ when $\frac{N}{l}\equiv1(\mathrm{mod\ }4)$ and $-i \cdot i^{N}=+\big(\frac{-1}{N}\big)$ if $\frac{N}{l}\equiv3(\mathrm{mod\ }4)$, and this depends on $l$ only through the residue of $\frac{N}{l}$ modulo 8, we may apply part $(iii)$ of Lemma \ref{recogser} with the divisors counted in $\sigma_{t+8\mathbb{Z}}(N)$ being $\frac{N}{l}$. This determines the coefficient of $q^{N/8}$ for odd $N$ as $\pm\frac{\pi}{4}\big(\frac{-1}{N}\big)\big(2s_{2}(N)-s_{1}(N)\big)$ if $\varepsilon=\frac{1}{4}$ and as $\mp\frac{\pi}{4}\big(\frac{-1}{N}\big)\big(2s_{2}(N)+s_{1}(N)\big)$ when $\varepsilon=\frac{3}{4}$. Part $(iii)$ of Corollary \ref{pow2twist}, with $\tau$ divided by 4 and with the sign corresponding to $\delta$, thus produces the remaining terms in Equations \eqref{1/41/2}, \eqref{1/43/2}, \eqref{3/41/2}, and \eqref{3/43/2}. This proves the theorem.
\end{proof}

In the proofs of Theorems \ref{2chinZ}, \ref{4deltainZ}, and \ref{4epsinZ} we have only stated the value of the constant term of the logarithmic derivative from Equation \eqref{logdereps1} or \eqref{logdergen}. It is easily verified, using the fact that $\theta\big[{0 \atop 0}\big]$ and $\theta\big[{0 \atop 1}\big]$ have constant term 1, $\theta\big[{1 \atop 0}\big]$ does not have a constant term, and all the powers of $q$ involved are non-negative, that the constant coefficient in each of the final formulae is indeed the asserted one. We conclude this section by noting that all the theta functions appearing in Theorem \ref{4epsinZ} with non-integral multiples of $\tau$ as their arguments can be replaced, via Equation \eqref{pardecom} (or sometimes Lemma 1.6 of Chapter 2 of \cite{[FK]}), by alternative combinations of theta functions in which only integral multiples of $\tau$ are substituted.

\section{Theta Derivatives with Characteristics from $\frac{1}{3}\mathbb{Z}$ \label{Charin1/3Z}}

\cite{[M1]} also quotes an identity from \cite{[F]} (numbered as Equation (2) in the former reference, as well as Equation (1.2) in the follow-up paper \cite{[M2]}) involving the theta derivative with characteristics $\big[{1 \atop 1/3}\big]$. For this theta derivatives, and some others, we can prove the following expressions.
\begin{thm}
With integral $\varepsilon$ and $\delta=\frac{1}{3}$ we have
\begin{equation}
\theta\big[{\textstyle{1 \atop 1/3}}\big]'(\tau,0)=-\tfrac{\pi}{\sqrt{3}}\theta\big[{\textstyle{1 \atop 1/3}}\big](\tau,0)\Theta_{3}(2\tau) \label{11/3}
\end{equation}
and
\begin{equation}
\theta\big[{\textstyle{0 \atop 1/3}}\big]'(\tau,0)=-\tfrac{\pi}{\sqrt{3}}\theta\big[{\textstyle{0 \atop 1/3}}\big](\tau,0)\Big(\Theta_{3}(\tau)-\Theta_{3}(2\tau)\Big), \label{01/3}
\end{equation}
while if $\delta$ is integral and $\varepsilon=\frac{1}{3}$ the theta derivatives are
\begin{equation}
\theta\big[{\textstyle{1/3 \atop 1}}\big]'(\tau,0)=\tfrac{\pi i}{3}\theta\big[{\textstyle{1/3 \atop 1}}\big](\tau,0)\Theta_{3}\big(\tfrac{2\tau}{3}\big) \label{1/31}
\end{equation}
and
\begin{equation}
\theta\big[{\textstyle{1/3 \atop 0}}\big]'(\tau,0)=\tfrac{\pi i}{3}\theta\big[{\textstyle{1/3 \atop 0}}\big](\tau,0)\Big(2\Theta_{3}\big(\tfrac{4\tau}{3}\big)-\Theta_{3}\big(\tfrac{2\tau}{3}\big)\Big). \label{1/30}
\end{equation}
Going over to characteristics in which $\delta=\frac{2}{3}$, we find that
\begin{equation}
\theta\big[{\textstyle{1 \atop 2/3}}\big]'(\tau,0)=-\tfrac{\pi}{\sqrt{3}}\theta\big[{\textstyle{1 \atop 2/3}}\big](\tau,0)\Big(2\Theta_{3}(4\tau)+\Theta_{3}(2\tau)\Big) \label{12/3}
\end{equation}
and
\begin{equation}
\theta\big[{\textstyle{0 \atop 2/3}}\big]'(\tau,0)=-\tfrac{\pi}{\sqrt{3}}\theta\big[{\textstyle{0 \atop 2/3}}\big](\tau,0)\Big(\Theta_{3}(\tau)+\Theta_{3}(2\tau)-2\Theta_{3}(4\tau)\Big). \label{02/3}
\end{equation}
On the other hand, when $\varepsilon=\frac{2}{3}$ we obtain
\begin{equation}
\theta\big[{\textstyle{2/3 \atop 1}}\big]'(\tau,0)=\tfrac{\pi i}{3}\theta\big[{\textstyle{2/3 \atop 1}}\big](\tau,0)\Big(\Theta_{3}\big(\tfrac{\tau}{3}\big)+\Theta_{3}\big(\tfrac{2\tau}{3}\big)\Big) \label{2/31}
\end{equation}
and
\begin{equation}
\theta\big[{\textstyle{2/3 \atop 0}}\big]'(\tau,0)=\tfrac{\pi i}{3}\theta\big[{\textstyle{2/3 \atop 0}}\big](\tau,0)\Big(2\Theta_{3}\big(\tfrac{4\tau}{3}\big)-\Theta_{3}\big(\tfrac{\tau}{3}\big)+\Theta_{3}\big(\tfrac{2\tau}{3}\big)\Big). \label{2/30}
\end{equation} \label{3ZandZ}
\end{thm}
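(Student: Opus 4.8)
The plan is to run, for each of the twelve characteristics, exactly the machine that proved the earlier theorems: substitute the relevant $\varepsilon$ and $\delta$ into the logarithmic-derivative expansions \eqref{logdereps1} (when $\varepsilon=1$) or \eqref{logdergen} (when $0\le\varepsilon<1$), read off $\beta=-\mathbf{e}(-\delta/2)$, and then recognize the resulting $q$-series as a combination of $\Theta_3$ at suitable arguments via part $(iv)$ of Lemma \ref{recogser} together with Corollary \ref{pow3twist}. The twelve cases split into two groups: those with $\varepsilon\in\{0,1\}$ and $\delta\in\{\tfrac13,\tfrac23\}$ (Equations \eqref{11/3}, \eqref{01/3}, \eqref{12/3}, \eqref{02/3}), where $\beta$ is a primitive third or sixth root of unity ($\beta=\zeta_3$ for $\delta=\tfrac13$, $\beta=\zeta_6$ for $\delta=\tfrac23$); and those with $\varepsilon\in\{\tfrac13,\tfrac23\}$ and $\delta\in\{0,1\}$ (the remaining eight equations), where $\beta=\pm1$ and the root of unity enters instead through the fractional exponent $\varepsilon$. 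In every case I first pin down the constant coefficient and use it as a running sanity check: for $\varepsilon=1$ it is $2\pi i\frac{1+\beta}{2(1-\beta)}$, equal to $-\frac{\pi}{\sqrt3}$ for $\beta=\zeta_3$ and to $-\pi\sqrt3=-\frac{\pi}{\sqrt3}\cdot 3$ for $\beta=\zeta_6$, both exhibiting the prefactor $-\frac{\pi}{\sqrt3}$; for the fractional-$\varepsilon$ cases it is $2\pi i\frac{\varepsilon}{2}$, i.e. $\frac{\pi i}{3}$ or $\frac{2\pi i}{3}=\frac{\pi i}{3}\cdot 2$, exhibiting the prefactor $\frac{\pi i}{3}$.

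For the first group I expand the double sum and collect the coefficient of $q^{N}$ (for $\varepsilon=1$) or $q^{N/2}$ (for $\varepsilon=0$). When $\varepsilon=1$ every divisor $d\mid N$ contributes $\zeta_3^{d}-\overline{\zeta}_3^{d}$ (or its $\zeta_6$-analogue), so Lemma \ref{recogser}$(iv)$ identifies the series with $\Theta_3$; since the exponents are $q^{N}$ rather than $q^{N/2}$, I rescale $\tau\mapsto 2\tau$, producing $\Theta_3(2\tau)$ in \eqref{11/3} and, through the $\zeta_6$-to-$\Theta_3$ dictionary in the last assertion of Corollary \ref{pow3twist}$(iii)$, $\Theta_3(2\tau)+2\Theta_3(4\tau)$ in \eqref{12/3}. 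When $\varepsilon=0$ the exponent $q^{l(n-1/2)}$ forces $N/d$ odd, so only the ``odd $N/d$'' restriction of Corollary \ref{pow3twist}$(ii)$ (and its $\zeta_6$-counterpart in $(iv)$) contributes, yielding $\Theta_3(\tau)-\Theta_3(2\tau)$ in \eqref{01/3} and $\Theta_3(\tau)+\Theta_3(2\tau)-2\Theta_3(4\tau)$ in \eqref{02/3}; here the exponents already match $\Theta_3(\tau)$, so no rescaling is needed.

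For the second group I substitute $\varepsilon=\tfrac13$ or $\tfrac23$ into \eqref{logdergen}. The two inner sums now carry exponents with denominator $3$ (for $\varepsilon=\tfrac13$) or $6$ (for $\varepsilon=\tfrac23$); writing the exponent as $\tfrac{N}{3}$ or $\tfrac{N}{6}$, the index $m=N/l$ runs over the residues $1,2\pmod 3$ (resp. the odd residues $1,5\pmod 6$), each weighted by $\big(\tfrac{m}{3}\big)$ in agreement with the two signs, so that the coefficient is $\sum_{m\mid N}\big(\tfrac{m}{3}\big)$ over all $m$ for $\varepsilon=\tfrac13$ and over the odd $m$ for $\varepsilon=\tfrac23$. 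With $\delta=1$ (so $\beta=1$) this is $\tfrac16$ times a coefficient of $\Theta_3$, identified by Lemma \ref{recogser}$(iv)$ and, in the $\varepsilon=\tfrac23$ case, restricted to odd divisors via Corollary \ref{pow3twist}$(ii)$; rescaling $\tau\mapsto\tfrac{2\tau}{3}$ or $\tau\mapsto\tfrac{\tau}{3}$ then gives \eqref{1/31} and \eqref{2/31}. Passing from $\delta=1$ to $\delta=0$ replaces $\beta=1$ by $\beta=-1$, hence multiplies the term of index $l=N/m$ by $(-1)^{l}=(-1)^{N/m}$; this is precisely the twist of Corollary \ref{pow3twist}$(iii)$, which (composed with the odd-divisor restriction and parts $(i)$--$(ii)$ in the $\varepsilon=\tfrac23$ case) converts $\Theta_3$ into $2\Theta_3(2\tau)-\Theta_3(\tau)$ and into $2\Theta_3(4\tau)-\Theta_3(\tau)+\Theta_3(2\tau)$ respectively, yielding \eqref{1/30} and \eqref{2/30} after the same rescaling.

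The root-of-unity and character bookkeeping is routine; the delicate part is composing the operations correctly and selecting the right member of the $\Theta_3$-family. Two points deserve care. First, the exponents live over $q^{1/3}$ or $q^{1/6}$, so each recognized $\Theta_3$-combination must be rescaled in $\tau$ only at the very end, and one must stay consistent about whether a given clause of Corollary \ref{pow3twist} is phrased for the coefficient of $q^{N/2}$ or for the bare divisor sum. Second, the $\delta=0$ equations require stacking the sign twist $(-1)^{N/m}$ on top of an odd-divisor restriction, and fixing the global sign of the resulting $\Theta_3$-combination is where errors are likeliest; I expect this to be the main obstacle. The constant-term check is exactly what resolves it: matching $2\pi i\frac{\varepsilon}{2}$ against $\tfrac16\cdot 2\pi i$ times the constant term of the candidate combination (e.g. $2-1=1$ for $2\Theta_3(2\tau)-\Theta_3(\tau)$, which forces this ordering rather than $\Theta_3(\tau)-2\Theta_3(2\tau)$, whose constant $1-2=-1$ would contradict the computed $\tfrac16$) disambiguates every sign, so I would carry this check through all twelve evaluations.
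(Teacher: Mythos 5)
Your proposal is correct and takes essentially the same route as the paper: the same substitutions of $(\varepsilon,\delta)$ and $\beta$ into Equations \eqref{logdereps1} and \eqref{logdergen}, the same recognition of the resulting divisor sums through part $(iv)$ of Lemma \ref{recogser} and the various parts of Corollary \ref{pow3twist}, and the same final rescalings of $\tau$ (note only that the theorem contains eight equations, not twelve, though your case division does cover all of them). One point is worth recording: your value $2\Theta_{3}(2\tau)-\Theta_{3}(\tau)$ for the $(-1)^{N/d}$-twist is the negative of what part $(iii)$ of Corollary \ref{pow3twist} literally states, but yours is the correct one---the corollary's printed sign appears to be a typo, since its own proof (subtracting twice the odd-$\frac{N}{d}$ restriction from $\Theta_{3}$) yields your expression, and Equations \eqref{1/30} and \eqref{2/30} require it---so the constant-term check you propose does catch exactly this discrepancy, though one should keep in mind that such a check confirms consistency rather than replacing a careful tracking of the sign through the composed operations.
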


\begin{proof}
Setting $\delta=\frac{1}{3}$ (hence $\beta=\zeta_{3}$) in Equation \eqref{logdereps1} yields the series with the constant term $\pi i\frac{1+\zeta_{3}}{1-\zeta_{3}}=-\frac{\pi}{\sqrt{3}}$ (recall that $1+\zeta_{3}=\zeta_{6}$ and $\Im\overline{\zeta}_{6}=-\frac{\sqrt{3}}{2}$) and in which the coefficient in front of $q^{N}$ is $2\pi i\sum_{l|N}(\zeta_{3}^{l}-\overline{\zeta}_{3}^{l})$. But this expression is $-\frac{\pi}{\sqrt{3}}s_{3}(N)$ by part $(iv)$ of Lemma \ref{recogser}, and Equation \eqref{11/3} directly follows. Replacing the value of $\varepsilon$ to be 0, and using Equation \eqref{logdergen}, we obtain that $q^{N/2}$ now comes with the coefficient $2\pi i\sum_{l|N,\ 2\not|N/l}(\zeta_{3}^{l}-\overline{\zeta}_{3}^{l})$. Part $(ii)$ of Corollary \ref{pow3twist} then establishes Equation \eqref{01/3}. Next, set $\varepsilon=\frac{1}{3}$ and $\delta=1$ (i.e., $\beta=1$) in Equation \eqref{logdergen}, and we get the constant $\frac{\pi i}{3}$ together with the terms $2\pi iq^{l(3n-2)/3}$ and $-2\pi iq^{l(3n-1)/3}$ for natural $n$ and $l$. Since in the resulting coefficient of $q^{N/3}$ the contribution arising from the divisor $d=\frac{N}{l}$ is easily seen to be $2\pi i\big(\frac{d}{3}\big)$, part $(iv)$ of Lemma \ref{recogser} (with $\tau$ rescaled) implies Equation \eqref{1/31}. Taking the value of $\delta$ to be 0 (so that $\beta=-1$), the expressions just mentioned above are multiplied by $(-1)^{l}$, which is $(-1)^{N/d}$ for $d=\frac{N}{l}$. Equation \eqref{1/30} thus follows from part $(iii)$ of Corollary \ref{pow3twist} (with the same rescaling).

We now substitute $\delta=\frac{2}{3}$ (which implies $\beta=\zeta_{6}$) in Equation \eqref{logdereps1}. The constant $\pi i\frac{1+\zeta_{6}}{1-\zeta_{6}}$ equals $-\sqrt{3}\pi$ (as $1-\zeta_{6}=\overline{\zeta}_{6}$ and $\zeta_{3}+\zeta_{6}=i\sqrt{3}$), and $q^{N}$ comes multiplied by $2\pi i\sum_{l|N}(\zeta_{6}^{l}-\overline{\zeta}_{6}^{l})$. Part $(iii)$ of Corollary \ref{pow3twist} (with the sign inverted and $\tau$ doubled) implies Equation \eqref{12/3}. Leaving the value of $\delta$ untouched, but working with $\varepsilon=0$ in Equation \eqref{logdergen}, we write $N=l(2n-1)$ as usual, and obtain that the coefficient of $q^{N/2}$ is $2\pi i\sum_{l|N,\ 2\not|N/l}(\zeta_{6}^{l}-\overline{\zeta}_{6}^{l})$. Part $(iv)$ of Corollary \ref{pow3twist} thus yields Equation \eqref{02/3}. Turning to the case where $\varepsilon=\frac{2}{3}$ and $\delta=\beta=1$, the series from Equation \eqref{logdergen} consists of the constant term $\frac{2\pi i}{3}$ as well as the terms $2\pi iq^{l(6n-5)/6}$ and $-2\pi iq^{l(6n-1)/6}$. As the divisor $d=\frac{N}{l}$ again contributes $2\pi i\big(\frac{d}{3}\big)$ to $q^{N/6}$ but is now restricted to be odd, part $(ii)$ of Corollary \ref{pow3twist} (with $\tau$ divided by 3) proves Equation \eqref{2/31}. Finally, with $\delta=0$ and $\beta=-1$ (and the same value of $\varepsilon$) we get the same expressions but multiplied by $(-1)^{l}$. As $d=\frac{N}{l}$ is odd here, this sign is the same as $(-1)^{N}$, and Equation \eqref{2/30} follows from part $(iv)$ of Corollary \ref{pow3twist} (again with the rescaled $\tau$). This completes the proof of the theorem.
\end{proof}

The arguments from the proof of Propositions \ref{intchar} and \ref{halfchar}, combined with product identities of the form $\prod_{n=1}^{\infty}(1 \pm q^{3n-1})(1 \pm q^{3n-2})=\prod_{n=1}^{\infty}\frac{1 \pm q^{n}}{1 \pm q^{3n}}$ and $\prod_{n=1}^{\infty}(1 \pm q^{6n-1})(1 \pm q^{6n-5})=\prod_{n=1}^{\infty}\frac{1 \pm q^{2n-1}}{1 \pm q^{6n-3}}$ together with equalities similar to $(1\pm\zeta_{3}q^{n})(1\pm\overline{\zeta}_{3}q^{n})=\frac{1 \pm q^{3n}}{1 \pm q^{n}}$, can be used for establishing the following result, which essentially contains all the formulae from Theorem 8.2 of \cite{[K]} and the weight $\frac{1}{2}$ part of Theorem 1.2 of \cite{[LO]} (with some additional formulae from Theorem 8.1 of the former reference as well as Theorem 1.1 of the latter).
\begin{prop}
The theta constants appearing in the equations from Theorem \ref{3ZandZ} have the following representations as single eta quotients:
\[\theta\big[{\textstyle{1 \atop 1/3}}\big](\tau,0)=\sqrt{3}\eta(3\tau),\quad\theta\big[{\textstyle{0 \atop 1/3}}\big](\tau,0)=\frac{\eta^{2}(\tau)\eta\big(\frac{3\tau}{2}\big)}{\eta\big(\frac{\tau}{2}\big)\eta(3\tau)},\quad\theta\big[{\textstyle{1/3 \atop 1}}\big](\tau,0)=\zeta_{12}\eta\big(\tfrac{\tau}{3}\big),\] \[\theta\big[{\textstyle{1/3 \atop 0}}\big](\tau,0)=\frac{\eta^{2}(\tau)\eta\big(\frac{2\tau}{3}\big)}{\eta(2\tau)\eta\big(\frac{\tau}{3}\big)},\quad\theta\big[{\textstyle{1 \atop 2/3}}\big](\tau,0)=\frac{\eta^{2}(\tau)\eta(6\tau)}{\eta(2\tau)\eta(3\tau)},\] \[\theta\big[{\textstyle{0 \atop 2/3}}\big](\tau,0)=\frac{\eta^{2}(3\tau)\eta(2\tau)\eta\big(\frac{\tau}{2}\big)}{\eta(\tau)\eta\big(\frac{3\tau}{2}\big)\eta(6\tau)},\quad\theta\big[{\textstyle{2/3 \atop 1}}\big](\tau,0)=\zeta_{6}\frac{\eta^{2}(\tau)\eta\big(\frac{\tau}{6}\big)}{\eta\big(\frac{\tau}{2}\big)\eta\big(\frac{\tau}{3}\big)},\] and \[\theta\big[{\textstyle{2/3 \atop 0}}\big](\tau,0)=\frac{\eta^{2}\big(\frac{\tau}{3}\big)\eta(2\tau)\eta\big(\frac{\tau}{2}\big)}{\eta(\tau)\eta\big(\frac{2\tau}{3}\big)\eta\big(\frac{\tau}{6}\big)}.\] \label{charover3}
\end{prop}

Once again, several additional identities can now be proved.
\begin{cor}
The theta constant identity $\theta\big[{1 \atop 1/3}\big](\tau,0)=\overline{\zeta}_{12}\sqrt{3}\theta\big[{1/3 \atop 1}\big](9\tau,0)$ holds, as well as the eta identities
\[\eta^{3}\big(\tfrac{4\tau}{9}\big)+3\eta^{3}(4\tau)=\frac{\eta^{6}\big(\frac{2\tau}{3}\big)\eta^{6}(2\tau)+4\eta^{2}\big(\frac{\tau}{3}\big)\eta^{4}\big(\frac{4\tau}{3}\big)\eta^{2}(\tau)\eta^{4}(4\tau)}
{\eta\big(\frac{\tau}{3}\big)\eta^{2}\big(\frac{4\tau}{3}\big)\eta^{2}(\tau)\eta^{2}(4\tau)\eta\big(\frac{2\tau}{3}\big)\eta(2\tau)}\] and
\[\eta^{3}(\tau)+9\eta^{3}(9\tau)=\frac{\eta^{6}(2\tau)\eta^{6}(6\tau)+4\eta^{2}(\tau)\eta^{4}(4\tau)\eta^{2}(3\tau)\eta^{4}(12\tau)}{\eta^{2}(\tau)\eta^{2}(4\tau)\eta(3\tau)\eta^{2}(12\tau)\eta(2\tau)\eta(6\tau)}.\] As a consequence we also get the equality \[\eta(\tau)\big[\eta^{3}\big(\tfrac{4\tau}{3}\big)+3\eta^{3}(12\tau)\big]=\eta(3\tau)[\eta^{3}(\tau)+9\eta^{3}(9\tau)].\] \label{Foureta1/3}
\end{cor}

\begin{proof}
The first identity follows immediately from Proposition \ref{charover3}. For the theta identities, note that the Fourier expansion of $\theta\big[{1/3 \atop 1}\big]'(\tau,0)$ yields, by differentiating Equation \eqref{defser}, the series $\frac{\pi i\zeta_{12}}{3}\sum_{3\not|n}n\big(\tfrac{-4}{n}\big)q^{n^{2}/18}$. By separating the series from Equation \eqref{eta3exp} (with $\tau$ rescaled) according to the divisibility of the summation index by 3 we recognize this as the series of $\frac{\pi i\zeta_{12}}{3}\big[\eta^{3}\big(\frac{4\tau}{9}\big)+3\eta^{3}(4\tau)\big]$. Similarly, for $\theta\big[{1 \atop 1/3}\big]'(\tau,0)$ we get, by the same means together with analyzing odd powers of $\zeta_{12}$, the series $-\pi\sum_{n=0}^{\infty}a_{n}n\big(\tfrac{-4}{n}\big)q^{n^{2}/8}$ with $a_{n}$ being $-2$ when 3 divides $n$ and $+1$ otherwise. A similar argument shows that this series represents $-\pi[\eta^{3}(\tau)+9\eta^{3}(9\tau)]$. We substitute these expressions into Equations \eqref{1/31} and \eqref{11/3}, eliminate the scalar ($\frac{\pi i\zeta_{12}}{3}$ or $-\pi$), apply the relevant formulae from Proposition \ref{charover3}, and note that Proposition \ref{intchar} implies $\Theta_{3}(\tau)$ equals $\frac{\eta^{5}(\tau)\eta^{5}(3\tau)}{\eta^{2}(\tau/2)\eta^{2}(2\tau)\eta^{2}(3\tau/2)\eta^{2}(6\tau)}+4\frac{\eta^{2}(2\tau)\eta^{2}(6\tau)}{\eta(\tau)\eta(3\tau)}$ (but rescale $\tau$ appropriately in the substitutions). This proves the first two eta identities, and the third one now follows in two ways: One may multiply $\tau$ by 3 in the second identity and normalize to get a common denominator, or one notes that $\Theta_{3}(2\tau)$ can be obtained either as $\frac{3}{\pi i}\theta\big[{1/3 \atop 1}\big]'(3\tau,0)\big/\theta\big[{1/3 \atop 1}\big](3\tau,0)$ from Equation \eqref{1/31} or as $-\frac{\sqrt{3}}{\pi}\theta\big[{1 \atop 1/3}\big]'(\tau,0)\big/\theta\big[{1 \atop 1/3}\big](\tau,0)$ via Equation \eqref{11/3}, and we can compare the expressions obtained above for these quotients. This proves the corollary.
\end{proof}
The theta constant identity from Corollary \ref{Foureta1/3} already appears implicitly in \cite{[F]}, in Equation (2) of \cite{[M1]}, and in Equation (1.2) of \cite{[M2]}. These expressions are also related to Corollaries 3.3, 3.4, and 3.5 of \cite{[M2]}. Proposition 3.6 and Corollary 3.7 of that reference, as well as the equalities appearing in Sections 5 and 6 of \cite{[M3]}, can be combined with our Equation \eqref{11/3} in Theorem \ref{3ZandZ} to deduce further theta and eta identities.

In relation to Theorem \ref{3ZandZ}, Proposition \ref{charover3}, and Corollary \ref{Foureta1/3}, we mention a classical identity of Ramanujan, appearing essentially with the reference $(v)$ in Entry 1 on Page 346 of \cite{[B1]}. By writing explicitly the function $\Theta_{3}$ using the ring $\mathbb{Z}[\zeta_{3}]$, this identity becomes
\[\Theta_{3}(2\tau)=\sum_{n,m\in\mathbb{Z}^{2}}q^{m^{2}-mn+n^{2}}=\frac{\eta^{3}\big(\frac{\tau}{3}\big)+3\eta^{3}(3\tau)}{\eta(\tau)},\] and it is equivalent to the representation of a theta derivative using a rational function of theta constants, namely
\[\theta\big[{\textstyle{1 \atop 1/3}}\big]'(\tau,0)=-\tfrac{\pi\zeta_{3}}{3\sqrt{3}}\theta\big[{\textstyle{1 \atop 1/3}}\big](\tau,0)\cdot\frac{3\theta\big[{\textstyle{1/3 \atop 1}}\big]^{3}(\tau,0)+\sqrt{3}i\theta\big[{\textstyle{1 \atop 1/3}}\big]^{3}(\tau,0)}{\theta\big[{\textstyle{1/3 \atop 1}}\big](3\tau,0)}.\]

\smallskip

The information that we have about $\Theta_{3}$ suffices for obtaining formulae for yet another 8 theta derivatives.
\begin{thm}
We have the equalities
\begin{equation}
\theta\big[{\textstyle{1/3 \atop 1/3}}\big]'(\tau,0)=\tfrac{\pi i}{3}\theta\big[{\textstyle{1/3 \atop 1/3}}\big](\tau,0)\Big(\zeta_{3}\Theta_{3}\big(\tfrac{2\tau}{3}\big)+\sqrt{3}i\overline{\zeta}_{3}\Theta_{3}(2\tau)\Big) \label{1/31/3}
\end{equation}
and
\begin{equation}
\theta\big[{\textstyle{1/3 \atop 5/3}}\big]'(\tau,0)=\tfrac{\pi i}{3}\theta\big[{\textstyle{1/3 \atop 5/3}}\big](\tau,0)\Big(\overline{\zeta}_{3}\Theta_{3}\big(\tfrac{2\tau}{3}\big)-\sqrt{3}i\zeta_{3}\Theta_{3}(2\tau)\Big), \label{1/35/3}
\end{equation}
while $\theta\big[{1/3 \atop 2/3}\big]'(\tau,0)$ and $\theta\big[{1/3 \atop 4/3}\big]'(\tau,0)$ are
\begin{equation}
\tfrac{\pi i}{3}\theta\big[{\textstyle{1/3 \atop 2/3}}\big](\tau,0)\Big(2\zeta_{3}\Theta_{3}\big(\tfrac{4\tau}{3}\big)-\overline{\zeta}_{3}\Theta_{3}\big(\tfrac{2\tau}{3}\big)
+2\sqrt{3}i\overline{\zeta}_{3}\Theta_{3}(4\tau)+\sqrt{3}i\zeta_{3}\Theta_{3}(2\tau)\Big) \label{1/32/3}
\end{equation}
and
\begin{equation}
\tfrac{\pi i}{3}\theta\big[{\textstyle{1/3 \atop 4/3}}\big](\tau,0)\Big(2\overline{\zeta}_{3}\Theta_{3}\big(\tfrac{4\tau}{3}\big)-\zeta_{3}\Theta_{3}\big(\tfrac{2\tau}{3}\big)
-2\sqrt{3}i\zeta_{3}\Theta_{3}(4\tau)-\sqrt{3}i\overline{\zeta}_{3}\Theta_{3}(2\tau)\Big) \label{1/34/3}
\end{equation}
respectively. On the other hand, the respective expressions for $\theta\big[{2/3 \atop 1/3}\big]'(\tau,0)$ and $\theta\big[{2/3 \atop 5/3}\big]'(\tau,0)$ are
\begin{equation}
\tfrac{\pi i}{3}\theta\big[{\textstyle{2/3 \atop 1/3}}\big](\tau,0)\Big(\zeta_{3}\Theta_{3}\big(\tfrac{2\tau}{3}\big)+\overline{\zeta}_{3}\Theta_{3}\big(\tfrac{\tau}{3}\big)
+\sqrt{3}i\overline{\zeta}_{3}\Theta_{3}(2\tau)-\sqrt{3}i\zeta_{3}\Theta_{3}(\tau)\Big) \label{2/31/3}
\end{equation}
and
\begin{equation}
\tfrac{\pi i}{3}\theta\big[{\textstyle{2/3 \atop 5/3}}\big](\tau,0)\Big(\overline{\zeta}_{3}\Theta_{3}\big(\tfrac{2\tau}{3}\big)+\zeta_{3}\Theta_{3}\big(\tfrac{\tau}{3}\big)
-\sqrt{3}i\zeta_{3}\Theta_{3}(2\tau)+\sqrt{3}i\overline{\zeta}_{3}\Theta_{3}(\tau)\Big), \label{2/35/3}
\end{equation}
and we have the theta derivatives
\[\theta\big[{\textstyle{2/3 \atop 2/3}}\big]'(\tau,0)=\frac{\pi i}{3}\theta\big[{\textstyle{2/3 \atop 2/3}}\big](\tau,0)\Big[2\zeta_{3}\Theta_{3}\big(\tfrac{4\tau}{3}\big)-\zeta_{3}\Theta_{3}\big(\tfrac{\tau}{3}\big)+\]
\begin{equation}
+\overline{\zeta}_{3}\Theta_{3}\big(\tfrac{2\tau}{3}\big)+2\sqrt{3}i\overline{\zeta}_{3}\Theta_{3}(4\tau)
-\sqrt{3}i\overline{\zeta}_{3}\Theta_{3}(\tau)-\sqrt{3}i\zeta_{3}\Theta_{3}(2\tau)\Big] \label{2/32/3}
\end{equation}
and
\[\theta\big[{\textstyle{2/3 \atop 4/3}}\big]'(\tau,0)=\frac{\pi i}{3}\theta\big[{\textstyle{2/3 \atop 4/3}}\big](\tau,0)\Big[2\overline{\zeta}_{3}\Theta_{3}\big(\tfrac{4\tau}{3}\big)-\overline{\zeta}_{3}\Theta_{3}\big(\tfrac{\tau}{3}\big)+\]
\begin{equation}
+\zeta_{3}\Theta_{3}\big(\tfrac{2\tau}{3}\big)-2\sqrt{3}i\zeta_{3}\Theta_{3}(4\tau)
+\sqrt{3}i\zeta_{3}\Theta_{3}(\tau)+\sqrt{3}i\overline{\zeta}_{3}\Theta_{3}(2\tau)\Big]. \label{2/34/3}
\end{equation} \label{3bothZ}
\end{thm}

\begin{proof}
We begin by setting $\varepsilon=\frac{1}{3}$ and $\delta=\frac{1}{3}$ (resp. $\delta=\frac{5}{3}$), so that $\beta=\zeta_{3}$ (resp. $\beta=\overline{\zeta}_{3}$) in Equation \eqref{logdergen}. The constant coefficient is $\frac{\pi i}{3}$, while the other terms are $2\pi i\zeta_{3}^{l}q^{l(3n-2)/3}$ (resp. $2\pi i\overline{\zeta}_{3}^{l}q^{l(3n-2)/3}$) and $-2\pi i\overline{\zeta}_{3}^{l}q^{l(3n-1)/3}$ (resp. $2\pi i\zeta_{3}^{l}q^{l(3n-1)/3}$). Writing the exponent as $\frac{N}{3}$, we find that $N$ is divisible by 3 if and only if $l$ is, and the proof of Equation \eqref{1/31} in Theorem \ref{3ZandZ} (with the variable $\tau$ here multiplied by 3) evaluates the sum arising from these terms as $\frac{\pi i}{3}\Theta_{3}(2\tau)$. For the rest, we decompose $2\pi i\zeta_{3}^{l}$ and $2\pi i\overline{\zeta}_{3}^{l}$ as the sum and difference of $\pi i(\zeta_{3}^{l}+\overline{\zeta}_{3}^{l})$ and $\pi i(\zeta_{3}^{l}-\overline{\zeta}_{3}^{l})$ respectively, and note that the first term here equals $-\pi i$ for every $l$ not divisible by 3. The contribution from the term of this sort that arises from a divisor $l$ is thus $-\pi i$ when $\frac{N}{l}\equiv1(\mathrm{mod\ }3)$ and $+\pi i$ if $\frac{N}{l}\equiv2(\mathrm{mod\ }3)$ for both our values of $\delta$ and $\beta$, so that part $(iv)$ of Lemma \ref{recogser} (or Proposition \ref{theta1} directly) with $d=\frac{N}{l}$ recognizes this contribution as $-\frac{\pi i}{6}s_{3}(N)$. The other terms amount to $\pm\pi i\sum_{l|N}(\zeta_{3}^{l}-\overline{\zeta}_{3}^{l})$ (since $N$ is not divisible by 3, the condition that 3 does not divide the quotient $\frac{N}{l}$, which is either $3n-2$ or $3n-1$, is redundant), with the $+$ appearing for $\delta=\frac{1}{3}$ and the $-$ when $\delta=\frac{5}{3}$. But this is just $\mp\frac{\pi}{2\sqrt{3}}s_{3}(N)$ by another application of part $(iv)$ of Lemma \ref{recogser} (now with $d=l$). Now, the sum of the coefficients multiplying $s_{3}(N)$ here is $\frac{\pi i}{3}\zeta_{3}$ (resp. $\frac{\pi i}{3}\overline{\zeta}_{3}$), and they appear only for $N$ not divisible by 3. Therefore part $(i)$ of Corollary \ref{pow3twist} (with $\tau$ rescaled) establishes both Equations \eqref{1/31/3} and \eqref{1/35/3}, where for the coefficient of $\Theta_{3}(2\tau)$ we have used the equality $1-\zeta_{3}=\sqrt{3}i\overline{\zeta}_{3}$ (resp. $1-\overline{\zeta}_{3}=-\sqrt{3}i\zeta_{3}$).

Still in the case where $\varepsilon=\frac{1}{3}$, but now with $\delta=\frac{2}{3}$ (resp. $\delta=\frac{4}{3}$), we get that $\beta=\zeta_{6}$ (resp. $\beta=\overline{\zeta}_{6}$). Hence every $\zeta_{3}$ in the above paragraph has to be replaced by $\zeta_{6}$. The argument proving Equation \eqref{1/30} in Theorem \ref{3ZandZ} (with $\tau$ multiplied by 3 again) then shows that the terms involving $q^{N/3}$ with $N$ divisible by 3 yield $\frac{\pi i}{3}\big(2\Theta_{3}(4\tau)-\Theta_{3}(2\tau)\big)$, and we again consider, for $N$ (or equivalently $l$) not divisibile by 3, the contributions arising from the terms $\pi i(\zeta_{6}^{l}+\overline{\zeta}_{6}^{l})$ and $\pi i(\zeta_{6}^{l}-\overline{\zeta}_{6}^{l})$. As the former equals the same value as $\pi i(\zeta_{3}^{l}+\overline{\zeta}_{3}^{l})$ for even $l$ but its additive inverse for odd $l$, we recognize the resulting coefficient as $-\frac{\pi i}{6}s_{3}^{\hat{s}}(N)$ from part $(iii)$ of Corollary \ref{pow3twist} (with $d=\frac{N}{l}$). But by the same part of the same corollary the remaining terms combine to $\pm\frac{\pi}{2\sqrt{3}}s_{3}^{s}(N)$ (again, with the $+$ when $\delta=\frac{2}{3}$ and the $-$ if $\delta=\frac{4}{3}$---note the different sign in the multiplier of $s_{3}^{s}(N)$ there), and recalling that only terms in which 3 does not divide $N$ are considered, we find that combining the relevant series from part $(v)$ of Corollary \ref{pow3twist} (with $\tau$ rescaled) yields $\frac{\pi i}{3}\big[2\zeta_{3}\Theta_{3}\big(\frac{4\tau}{3}\big)-2\zeta_{3}\Theta_{3}(4\tau)+\overline{\zeta}_{3}\Theta_{3}(2\tau)-\overline{\zeta}_{3}\Theta_{3}\big(\frac{2\tau}{3}\big)\big]$ (resp. $\frac{\pi i}{3}\big[2\overline{\zeta}_{3}\Theta_{3}\big(\frac{4\tau}{3}\big)-2\overline{\zeta}_{3}\Theta_{3}(4\tau)+\zeta_{3}\Theta_{3}(2\tau)-\zeta_{3}\Theta_{3}\big(\frac{2\tau}{3}\big)\big]$), with the same considerations about the coefficients. Adding the previous expression from the terms with $N$ divisible by 3 yields Equations \eqref{1/32/3} and \eqref{1/34/3}, using the equalities from the end of the previous paragraph.

Back in the case where $\delta$ is $\frac{1}{3}$ (resp. $\frac{5}{3}$) and $\beta$ is $\zeta_{3}$ (resp. $\overline{\zeta}_{3}$), but now with $\varepsilon=\frac{2}{3}$, the constant term becomes $\frac{2\pi i}{3}$, and in the non-constant terms (with $\zeta_{3}^{l}$ and $-\overline{\zeta}_{3}^{l}$ once more) the exponents $\frac{3n-2}{3}$ and $\frac{3n-1}{3}$ are replaced by $\frac{6n-5}{6}$ and $\frac{6n-1}{6}$ respectively. As this means halving the value of $\tau$ (in comparison to the case with $\varepsilon=\frac{1}{3}$) but taking only divisors $l$ with odd $\frac{N}{l}$, the proof of Equation \eqref{2/31} in Theorem \ref{3ZandZ} (with $\tau$ rescaled yet again) determines the series involving the expressions $q^{N/6}$ where 3 divides $N$ as $\frac{\pi i}{3}\big(\Theta_{3}(\tau)+\Theta_{3}(2\tau)\big)$. For the other terms we express again the coefficients in terms of $\pi i(\zeta_{3}^{l}+\overline{\zeta}_{3}^{l})$ and $\pi i(\zeta_{3}^{l}-\overline{\zeta}_{3}^{l})$, and the former terms contribute $-\pi i$ or $+\pi i$ according to the same rule of the residue of $\frac{N}{l}$ modulo 3, but now $\frac{N}{l}$ must also be odd. By part $(ii)$ of Corollary \ref{pow3twist} (with $d=\frac{N}{l}$) the resulting coefficient is $-\frac{\pi i}{6}s_{3}^{o}(N)$, and an application of the same part of that corollary with $d=l$ produces $\mp\frac{\pi}{2\sqrt{3}}s_{3}^{\hat{o}}(N)$ (where we take $-$ for $\delta=\frac{1}{3}$ and $+$ in case $\delta=\frac{5}{3}$). The resulting combination of the appropriate series from part $(v)$ of Corollary \ref{pow3twist}, with $\tau$ divided by 3, thus yields $\frac{\pi i}{3}\big[\zeta_{3}\Theta_{3}\big(\frac{2\tau}{3}\big)-\zeta_{3}\Theta_{3}(2\tau)+\overline{\zeta}_{3}\Theta_{3}\big(\frac{\tau}{3}\big)-\overline{\zeta}_{3}\Theta_{3}(\tau)\big]$ (resp. $\frac{\pi i}{3}\big[\overline{\zeta}_{3}\Theta_{3}\big(\frac{2\tau}{3}\big)-\overline{\zeta}_{3}\Theta_{3}(2\tau)+\zeta_{3}\Theta_{3}\big(\frac{\tau}{3}\big)-\zeta_{3}\Theta_{3}(\tau)\big]$), and putting in the contribution from the terms in which $N$ is divisible by 3 gives, using the usual equalities for the coefficients, the desired Equations \eqref{2/31/3} and \eqref{2/35/3}.

Finally, let the index $\varepsilon$ remain $\frac{2}{3}$, take $\delta=\frac{2}{3}$ (resp. $\delta=\frac{4}{3}$) and $\beta=\zeta_{6}$ (resp. $\delta=\overline{\zeta}_{6}$) again, and replace every $\zeta_{3}$ by $\zeta_{6}$ in the previous paragraph. The series concerning the terms $q^{N/6}$ where $N$ is divisible by 3 is evaluated by the proof of Equation \eqref{2/30} in Theorem \ref{3ZandZ} (with $\tau$ rescaled appropriately) as $\frac{\pi i}{3}\big(2\Theta_{3}(4\tau)-\Theta_{3}(\tau)+\Theta_{3}(2\tau)\big)$. Next, in the decomposition of the remaining terms using $\pi i(\zeta_{6}^{l}+\overline{\zeta}_{6}^{l})$ and $\pi i(\zeta_{6}^{l}-\overline{\zeta}_{6}^{l})$, the former expression amounts to $-(-1)^{l}\pi i$ or $+(-1)^{l}\pi i$ according to the residue of $\frac{N}{l}$ modulo 3 and under the restriction that the latter number is odd. Part $(iv)$ of Corollary \ref{pow3twist} (with $d=\frac{N}{l}$) writes the latter coefficient as $-\frac{\pi i}{6}(-1)^{N}s_{3}^{o}(N)$, and evaluates the contribution of the other terms as $\pm\frac{\pi}{2\sqrt{3}}(-1)^{N}s_{3}^{\hat{o}}(N)$ (where $+$ appears for $\delta=\frac{2}{3}$ and $-$ when $\delta=\frac{4}{3}$). From the last two equalities in part $(v)$ of Corollary \ref{pow3twist} (with the rescaling of $\tau$ as always) we deduce that the series involving only $q^{N/6}$ in which 3 does not divide $N$ equals $\frac{\pi i}{3}\big[2\zeta_{3}\Theta_{3}\big(\frac{4\tau}{3}\big)-\zeta_{3}\Theta_{3}\big(\frac{\tau}{3}\big)-2\zeta_{3}\Theta_{3}(4\tau)+\zeta_{3}\Theta_{3}(\tau)
+\overline{\zeta}_{3}\Theta_{3}\big(\frac{2\tau}{3}\big)-\overline{\zeta}_{3}\Theta_{3}(2\tau)\big]$ (resp. $\frac{\pi i}{3}\big[2\overline{\zeta}_{3}\Theta_{3}\big(\frac{4\tau}{3}\big)-\overline{\zeta}_{3}\Theta_{3}\big(\frac{\tau}{3}\big)-
2\overline{\zeta}_{3}\Theta_{3}(4\tau)+\overline{\zeta}_{3}\Theta_{3}(\tau)+\zeta_{3}\Theta_{3}\big(\frac{2\tau}{3}\big)-\zeta_{3}\Theta_{3}(2\tau)\big]$) in total, and adding the function arising from the terms with $q^{N/6}$ where 3 divides $N$ yields, after the usual coefficient manipulation, the required Equations \eqref{2/32/3} and \eqref{2/34/3}. This completes the proof of the theorem.
\end{proof}

Also here the constant terms in all the formulae appearing in Theorems \ref{3ZandZ} and \ref{3bothZ} are indeed the ones asserted in the proof (since $\Theta_{3}$ has constant term 1), using some evaluations involving $\zeta_{3}$ for the latter theorem.

\smallskip

We conclude by remarking that for characteristics with 5 in the denominator the description of the associated theta function may not be so straightforward as in Proposition \ref{theta1}, since the class number of $\mathbb{Q}(\sqrt{-5})$ is not 1. On the other hand, expressions involving rational functions of theta constants exist for such characteristics: See Theorem 5.2 and 6.2 of \cite{[M2]} for the case where $\varepsilon=1$ (and $\delta\in\frac{1}{5}\mathbb{Z}$), and Section 4 of \cite{[M3]} in general. Turning to 7 in the denominator, $\mathbb{Q}(\sqrt{-7})$ is of class number 1, and a normalization for $\Theta_{7}$ is the function appearing in Lemma 3.2 of \cite{[HL]} and denoted by $z_{7}$ in Section 3.2 of \cite{[O]} or in Section 7.6 of \cite{[C]} . However, since sums of 7th roots of unity with characters or restricting to only 2 residues modulo 7 (with opposite signs) does not immediately give the coefficients of $\Theta_{7}$, the evaluation with characteristics having this denominator is again substantially more difficult. It is possible though that allowing 6 in the denominator, or mixing 2 and 3 in the denominators of the two characteristics, may still yield expressions that are possible to evaluate in our methods. These questions are left fur future research.

\noindent\textsc{Einstein Institute of Mathematics, the Hebrew University of Jerusalem, Edmund Safra Campus, Jerusalem 91904, Israel}

\noindent E-mail address: zemels@math.huji.ac.il

\end{document}